\documentclass[pdflatex,bst/sn-mathphys-ay]{sn-jnl}% Math and Physical Sciences Author Year Reference Style
\usepackage{graphicx}%
\usepackage{multirow}%
\usepackage{amsmath,amssymb,amsfonts}%
\usepackage{amsthm}%
\usepackage{mathrsfs}%
\usepackage[title]{appendix}%
\usepackage{xcolor}%
\usepackage{textcomp}%
\usepackage{manyfoot}%
\usepackage{booktabs}%
\usepackage{algorithm}%
\usepackage{algorithmicx}%
\usepackage{algpseudocode}%
\usepackage{listings}%
\usepackage{float}
\usepackage{caption}

\usepackage{tikz}
\usetikzlibrary{arrows.meta}

\usepackage{commands}

\theoremstyle{thmstyleone}%
\newtheorem{theorem}{Theorem}%  meant for continuous numbers
\newtheorem{lemma}[theorem]{Lemma}

\newtheorem{proposition}[theorem]{Proposition}% 
\newtheorem{corollary}[theorem]{Corollary}

\theoremstyle{thmstyletwo}%
\newtheorem{example}{Example}%
\newtheorem{remark}{Remark}%

\theoremstyle{thmstylethree}%
\newtheorem{definition}{Definition}%

\begin{document}

\title[Article Title]{Transformation semigroup perspective on the magma monoid}

%%=============================================================%%
%% GivenName	-> \fnm{Joergen W.}
%% Particle	-> \spfx{van der} -> surname prefix
%% FamilyName	-> \sur{Ploeg}
%% Suffix	-> \sfx{IV}
%% \author*[1,2]{\fnm{Joergen W.} \spfx{van der} \sur{Ploeg} 
%%  \sfx{IV}}\email{iauthor@gmail.com}
%%=============================================================%%

\author*[1]{\fnm{Yakiv} \sur{Baiduk}}\email{ybaiduk@kse.org.ua}

\author[2]{\fnm{Sergiy} \sur{Kozerenko}}\email{s.kozerenko@kse.org.ua}
 
\affil[1]{\orgdiv{Department of Mathematics}, \orgname{Kyiv School of Economics}, \orgaddress{Mykoly Shpaka str. 3, 03113 \city{Kyiv}, \country{Ukraine}}}

\affil[2]{\orgdiv{Computer Science Department}, \orgname{Kyiv School of Economics}, \orgaddress{Mykoly Shpaka str. 3, 03113 \city{Kyiv}, \country{Ukraine}}}

\abstract{The monoid of all binary operations was first introduced by H.~S.~Kim and J.~Neggers in 2008. Since then, different aspects and applications of this monoid were studied, while several questions about its semigroup-theoretic properties remain unanswered. We employ a transformation semigroup perspective to fully characterize principal left and right ideals, idempotent and regular elements of this monoid, as well as provide precise combinatorial enumerations of them. This approach gives a general framework for most of the existing results on ideals in the magma monoid. We also answer several open questions posed in the 2023 PhD dissertation of A.~Rafieipour. Finally, we correct an error regarding the description of the center of the magma monoid from the 2011 paper of H.~F.~Fayomi.}

\keywords{magma monoid; transformation semigroup; ideal; idempotent element; regular element.}

\pacs[MSC 2020]{20M10, 20M12, 20M20.}

\maketitle

\newpage

\section{Introduction and preliminaries}\label{sec1}
Throughout this article, $X$ is a finite set of cardinality $n$. \\

Denote by $\M(X)$ (or simply by $\M$) the set of all possible binary operations on $X$:
\[
    \M(X) = X^{X \times X}.
\]

In 2008, H.~S.~Kim and J.~Neggers~\cite{BinarySystems2008} introduced the binary operation $\trg$ on $\M$ (although the notation at that time was different).

\begin{definition}
    Let $\trg: \M \times \M \to \M$ be the binary operation defined as
    \[
    a (\circ \trg \ast) b = (a \circ b) \ast (b \circ a)
    \] for all  $\circ, \ast \in \M$ and $a, b \in X$.
\end{definition}

\begin{proposition}{\rm\cite[Theorem 2]{BinarySystems2008}}\label{prop-1}
 For any set $X$, the pair $(\M(X), \trg)$ is a monoid with the identity element being the left zeros operation $\circ_{lz}$ (defined as $a \circ_{lz} b = a$ for all $a, b \in X$).   
\end{proposition}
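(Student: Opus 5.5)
The plan is to verify the three monoid axioms directly from the definition of $\trg$: closure, associativity, and the two identity laws for $\circ_{lz}$. Closure is immediate, since for $\circ,\ast\in\M$ the rule $(a,b)\mapsto (a\circ b)\ast(b\circ a)$ is again a map $X\times X\to X$. The identity laws come from one-line computations. For any $\circ\in\M$ and $a,b\in X$,
\[
a(\circ_{lz}\trg\circ)b=(a\circ_{lz}b)\circ(b\circ_{lz}a)=a\circ b,
\]
and
\[
a(\circ\trg\circ_{lz})b=(a\circ b)\circ_{lz}(b\circ a)=a\circ b.
\]
Hence $\circ_{lz}\trg\circ=\circ=\circ\trg\circ_{lz}$.

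The substantive step is associativity, and here I would use the transformation viewpoint that the paper adopts. To each $\circ\in\M$ attach the map $\varphi_\circ: X\times X\to X\times X$ given by $\varphi_\circ(a,b)=(a\circ b,\,b\circ a)$. Let $\pi_1:X\times X\to X$ be the first projection and $\sigma$ the swap $\sigma(a,b)=(b,a)$. Then $\circ$ is recovered as $\pi_1\varphi_\circ$, and the map $\varphi_\circ$ commutes with $\sigma$, that is, $\varphi_\circ\sigma=\sigma\varphi_\circ$. In this language the definition says exactly that
\[
\circ\trg\ast=\ast\,\varphi_\circ,
\]
viewing $\ast$ as a map $X\times X\to X$.

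Next I would show that $\varphi_{\circ\trg\ast}=\varphi_\ast\varphi_\circ$. The first coordinate matches by the definition. For the second coordinate,
\[
b(\circ\trg\ast)a=(b\circ a)\ast(a\circ b),
\]
which is the second coordinate of $\varphi_\ast(a\circ b,b\circ a)$. So $\circ\mapsto\varphi_\circ$ is an anti-homomorphism into the full transformation monoid on $X\times X$. It is also injective, because $\circ=\pi_1\varphi_\circ$.

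Associativity then follows from associativity of composition. We have
\[
\varphi_{(\circ\trg\ast)\trg\diamond}=\varphi_\diamond\varphi_\ast\varphi_\circ=\varphi_{\circ\trg(\ast\trg\diamond)},
\]
and injectivity gives $(\circ\trg\ast)\trg\diamond=\circ\trg(\ast\trg\diamond)$.

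As a sanity check one could instead expand both sides directly. Both evaluate at $(a,b)$ to
\[
\bigl((a\circ b)\ast(b\circ a)\bigr)\diamond\bigl((b\circ a)\ast(a\circ b)\bigr).
\]
The main obstacle is keeping the second coordinate straight, which requires using the swapped pair $(b,a)$. The embedding handles this cleanly. In addition, $\varphi_{\circ_{lz}}=\mathrm{id}$ recovers the identity law.
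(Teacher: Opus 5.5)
Your proof is correct and follows the paper's own framework. The paper only cites Kim--Neggers for this proposition, and the direct expansion you include as a sanity check is the elementary proof. Your identity $\varphi_{\circ\trg\ast}=\varphi_\ast\varphi_\circ$, together with injectivity via $\circ=\pi_1\varphi_\circ$, is exactly the computation in the proof of Theorem~\ref{M_iso_to_T_XxX}; since that computation never uses associativity of $\trg$, deriving the monoid axioms from it is not circular. You call the map an anti-homomorphism (composing right to left), while the paper calls it an isomorphism (reading products in $\T_{X\times X}$ left to right); this is only a difference of convention and does not affect the argument.
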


In light of Proposition~\ref{prop-1}, $\M(X)$ is also known as the \textit{magma monoid} on $X$. Various interesting structures on $\M$ were introduced over the past years, and some connections with other fields were built. For example, a family of submonoids of $\M$ given by outsets $Out(\ast) = \{\circ \in \M: \ast \text{ distributes over } \circ\}$ was analyzed in~\cite{Lopez2022}. Moreover, the kernel-cokernel decomposition of binary operations was introduced in \cite[Definition~4.2]{Lopez2022}, and the group of units of $\M$ was fully described \cite[Theorem~3.5]{Lopez2022}. On top of that, two families of graph-induced binary operations were studied in~\cite{Rafieipour2023}, showing that these families are subsemigroups of $\M$, and retrieving some of their structural properties. For the information on the global structure of distributivity relations on $\M$ we refer to the work~\cite{Distributivity2021}. \\

However, to the best of our knowledge, a wide range of algebraic properties of $\M$ remains unknown to this day. A little is known about ideals and Green`s relations, idempotent elements, and regularity of operations. \textit{Some} families of idempotents were described, \textit{some} sets were shown to be ideals in $\M$, but a lack of general characterizations of key semigroup-theoretic properties is clear. In fact, problems regarding these core properties are explicitly stated as open in \cite{Rafieipour2023}. \\

This work aims to close the mentioned gap by providing a clean and general description of principal ideals and Green's relations, idempotents, and regular elements. We also introduce a new decomposition of binary operations, which helps us to extract precise combinatorial results on key objects, such as the total number of idempotent elements in $\M$ or cardinalities of Green's classes.

Most of our results come out naturally after building a right framework. We embed $\M(X)$ into the transformation semigroup $\T_{X \times X}$ of all self-mappings of $X \times X$. Transformation semigroups are very well studied~\cite{GanyushkinMazor2009}, and most of their structural properties are transferred to $\M$ almost without any changes, although some preliminary work is needed to make this transition seamless. \\

The major part of results from this article was announced at the International Conference dedicated to the 75th Anniversary of the birth of Volodymyr Masliuchenko~\cite{BaidukChernivtsi2025} and Ukraine Mathematics Conference ``At the End of the Year 2025''~\cite{BaidukKozerenko2025IdempotentMagma}.

\section{The pairmorph transformation}
When attempting to retrieve some information about the algebraic structure of $\M$, one sooner or later faces the problem that for an arbitrary $\ast \in \M$, the elements $a \ast b$ and $b \ast a$ are completely unrelated. This significantly complicates the proofs and clearly indicates the need for a different approach. A natural idea is therefore to consider both $a \ast b$ and $b \ast a$ simultaneously for each pair $(a,b) \in X \times X$.

\begin{definition}
    Let $\ast \in \M$ be a binary operation. \textit{The pairmorph transformation} induced by $\ast$ is a transformation $T_\ast: X \times X \to X \times X$ defined as
    \[
    T_\ast(a, b) := (a \ast b, b \ast a).
    \]
    for all $(a,b)\in X\times X$.
\end{definition}

This idea is not new and has already appeared in \cite[Theorem~3.5]{Lopez2022}, where it was used to fully characterize the group of units in $\mathcal{M}$. In that setting, however, it arose as a specific instance of what is referred to as $\ast \times \circ\zeta$. What is new to our approach is the fact that we consider pairmorph transformations as the main perspective, rarely working with binary operations directly. \\

The main reason for studying pairmorph transformations is the fact that they induce a monomorphism of $\M$ into $\T_{X \times X}$. In other words, the magma monoid can be viewed as a submonoid of the full transformation semigroup on $X \times X$. Lemma~\ref{T_ast_criterion} and Theorem~\ref{M_iso_to_T_XxX} establish this monomorphism and fully characterize its image.

\begin{lemma}\label{T_ast_criterion}
    A given transformation $T$ on $X \times X$ is the $T_\ast$ for some $\ast \in \M$ if and only if 
    \[
        T((a, b)^\leftarrow) = (T(a,b))^\leftarrow,
    \]for all $a, b \in X$, where $(u, v)^\leftarrow = (v, u)$.
\end{lemma}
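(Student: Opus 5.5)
The proof has two directions, and both reduce to unpacking the definition of $T_\ast$ together with the swap map $(u,v)^\leftarrow = (v,u)$.

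\emph{Necessity.} Suppose $T = T_\ast$ for some $\ast \in \M$. Then for any $a,b \in X$ we compute directly
\[
T_\ast((a,b)^\leftarrow) = T_\ast(b,a) = (b \ast a,\, a \ast b) = \bigl(a \ast b,\, b \ast a\bigr)^\leftarrow = (T_\ast(a,b))^\leftarrow,
\]
which is exactly the required identity.

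\emph{Sufficiency.} Suppose $T$ satisfies $T((a,b)^\leftarrow) = (T(a,b))^\leftarrow$ for all $a,b$. Let $\pi_1 \colon X\times X \to X$ denote the first-coordinate projection, and define $\ast$ by
\[
a \ast b := \pi_1(T(a,b)).
\]
This is a well-defined element of $\M$, since it assigns an element of $X$ to every pair $(a,b)$. We must check that $T_\ast(a,b) = (a\ast b,\, b \ast a)$ coincides with $T(a,b)$. The first coordinates agree by construction. For the second coordinate, write $\pi_2$ for the second projection and use the hypothesis:
\[
b \ast a = \pi_1(T(b,a)) = \pi_1\bigl((T(a,b))^\leftarrow\bigr) = \pi_2(T(a,b)).
\]
Hence $T_\ast(a,b) = \bigl(\pi_1(T(a,b)),\, \pi_2(T(a,b))\bigr) = T(a,b)$ for all $a,b$, so $T = T_\ast$.

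There is no real obstacle here. The only point needing care is the diagonal pairs $(a,a)$. There the hypothesis forces $T(a,a)$ to be fixed by the swap, so $T(a,a) = (c,c)$ for some $c \in X$. This is consistent with the general argument, since $\pi_1$ and $\pi_2$ of $T(a,a)$ then coincide, and the computation above covers this case without any separate treatment.

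Finally, the operation $\ast$ is uniquely determined by $T$, because $a \ast b$ is read off as $\pi_1(T_\ast(a,b))$. This uniqueness will be useful later for the injectivity part of Theorem~\ref{M_iso_to_T_XxX}.
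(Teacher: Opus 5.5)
Your proof is correct and follows essentially the same route as the paper: necessity by direct computation, and sufficiency by defining $a \ast b$ as the first coordinate of $T(a,b)$, then using swap-compatibility to identify $b \ast a$ with the second coordinate of $T(a,b)$. The paper writes your $\pi_1$ and $\pi_2$ as $\circ_{lz}$ and $\circ_{rz}$; your extra remarks on diagonal pairs and on uniqueness of $\ast$ are accurate but not needed for the lemma.
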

\begin{proof}
   We first prove that this condition is necessary.
    Let $\ast \in \M$ be a binary operation. For any $a, b \in X$, we have 
    \[
        T_\ast((a, b)^\leftarrow) = T_\ast(b, a) = (b\ast a, a \ast b) = (a \ast b, b \ast a)^\leftarrow = (T_\ast(a, b))^\leftarrow.
    \]
    
    Now we prove the sufficiency of this condition. Let $T \in \T_{X \times X}$ be an arbitrary transformation of $X \times X$ that commutes with $^\leftarrow$. We are going to construct an operation $\ast \in \M$ that will have $T_\ast = T$.
    For every $(a, b) \in X \times X$, define $a \ast b = \circ_{lz}(T(a, b))$.

    If $T_\ast(a, b)$ is equal to some $(x, y) \in X \times X$, then we know that $\circ_{lz}(T(a, b)) = x$ and $\circ_{lz}(T(b, a)) = y$. But since $T$ commutes with $^\leftarrow$, $\circ_{lz}(T(b, a))$ is the same as $\circ_{rz}(T(a, b))$, where $\circ_{rz}$ is the right-zeros operation.

    The equation $\circ_{lz}(T(b, a)) = \circ_{rz}(T(a, b))$ allows us to conclude that
    \[
        T_\ast(a, b) = (\circ_{lz}(T(a, b)), \circ_{lz}(T(b, a))) = (\circ_{lz}(T(a, b)), \circ_{rz}(T(a, b))) = T(a, b),
    \] completing the proof.
\end{proof}

\begin{theorem}\label{M_iso_to_T_XxX}
The magma monoid $(\M, \trg)$ is isomorphic to a submonoid of all transformations $T \in \T_{X \times X}$ that commute with $^\leftarrow$.
\end{theorem}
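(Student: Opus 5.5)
The plan is to take the map $\varphi\colon \M \to \T_{X\times X}$, $\varphi(\ast) = T_\ast$, and show three things: it is injective, its image is exactly the set $\mathcal{C}$ of transformations commuting with $^\leftarrow$, and it is a monoid homomorphism onto $\mathcal{C}$. Before the homomorphism step I also have to check that $\mathcal{C}$ is in fact a submonoid.

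\emph{Injectivity.} This is immediate, because $\ast$ can be read off $T_\ast$ coordinatewise: $a \ast b = \circ_{lz}(T_\ast(a,b))$. Hence $T_\ast = T_\circ$ forces $a\ast b = a\circ b$ for all $a,b$.

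\emph{Image.} Lemma~\ref{T_ast_criterion} says precisely that the image of $\varphi$ is $\mathcal{C}$.

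\emph{$\mathcal{C}$ is a submonoid.} The identity transformation commutes with $^\leftarrow$. If $S,T\in\mathcal{C}$, then
\[
ST((a,b)^\leftarrow)=S\bigl((T(a,b))^\leftarrow\bigr)=(ST(a,b))^\leftarrow ,
\]
so $\mathcal{C}$ is closed under composition. We also have $T_{\circ_{lz}}(a,b)=(a,b)$, so $\varphi$ sends the identity $\circ_{lz}$ of Proposition~\ref{prop-1} to the identity transformation.

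\emph{Multiplicativity.} Unwinding the definition of $\trg$ gives
\[
T_{\circ\trg\ast}(a,b)=\bigl((a\circ b)\ast(b\circ a),\,(b\circ a)\ast(a\circ b)\bigr)=T_\ast\bigl(a\circ b,\,b\circ a\bigr)=T_\ast\bigl(T_\circ(a,b)\bigr).
\]
Thus $T_{\circ\trg\ast}$ is ``first apply $T_\circ$, then $T_\ast$''.

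The main obstacle is this order of composition. If transformations are written on the right and composed left to right, i.e. $(x)(fg)=((x)f)g$ as in \cite{GanyushkinMazor2009}, then $\varphi(\circ\trg\ast)=\varphi(\circ)\varphi(\ast)$ and $\varphi$ is an isomorphism of $(\M,\trg)$ onto $\mathcal{C}$. I will adopt this convention explicitly, since it matches the transformation-semigroup literature the paper relies on. With ordinary right-to-left composition, $\varphi$ is instead an anti-isomorphism, and the statement should then be read with the opposite multiplication on $\T_{X\times X}$. I will state this remark so the convention used in later results on left and right ideals is unambiguous.

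Combining bijectivity onto $\mathcal{C}$, multiplicativity and preservation of the identity proves the theorem.
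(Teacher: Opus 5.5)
Your proof is correct and follows the paper's own argument: the map $\ast\mapsto T_\ast$, injectivity by reading off $a\ast b$ as the first coordinate, surjectivity onto the commuting transformations via Lemma~\ref{T_ast_criterion}, and the identity $T_{\circ\trg\ast}(a,b)=T_\ast(T_\circ(a,b))$. Your extra checks (closure of that set under composition, $T_{\circ_{lz}}=\mathrm{id}$) and your explicit left-to-right composition convention make precise what the paper leaves implicit, since with ordinary right-to-left composition this identity gives an anti-isomorphism rather than an isomorphism.
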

\begin{proof}
    Denote by $\T^\leftrightarrow$ the set of all transformations of $X \times X$ for which $T((a, b)^\leftarrow) = (T(a, b))^\leftarrow$. Let $\phi: \M \to \T^\leftrightarrow$ be a map such that $\phi(\ast) = T_\ast$. We are going to prove that $\phi$ is an isomorphism of $\M$ to $\T^\leftrightarrow$. Clearly, $\phi$ is injective, and its surjectivity is a corollary of Lemma~\ref{T_ast_criterion}. Thus, $\phi$ is a bijection. The only thing left is to show that it is also a homomorphism.

    Notice that for arbitrary operations $\ast, \circ \in \M$ and any $a, b \in X$ the product operation $a (\circ \trg \ast) b$ can be expressed as $\ast(T_\circ(a, b))$. Because of that,
    \begin{align*}
        \phi(\circ \trg \ast)(a, b) &= T_{\circ \trg \ast}(a,b) = (a (\circ \trg \ast) b, b(\circ \trg \ast)a) = (\ast(T_\circ(a, b)), \ast(T_\circ(b,a))) \\
        &= (\ast(T_\circ(a,b)), \ast((T_\circ(a,b))^\leftarrow)) = T_\ast(T_\circ(a, b)).
    \end{align*}

    Therefore, $\phi$ is an isomorphism of $\M$ to $\T^\leftrightarrow.$
\end{proof}

With Theorem~\ref{M_iso_to_T_XxX} in mind, we now use binary operations and  their pairmorph transformations interchangeably, because their semigroup-theoretic properties are identical. This allows us to transfer knowledge about $\mathcal{T}_{X \times X}$ to $\mathcal{M}$. We begin by adapting some existing notation.

 \begin{definition}
     For a binary operation $\ast \in \M$, define its \textit{pairmorph image} as $im(T_\ast) = T_\ast(X \times X)$ and its \textit{diagonal image} as $diag(T_\ast) = T_\ast(\Delta(X))$.
 \end{definition}

The equality $T_\ast(b, a) = (T_\ast(a, b))^\leftarrow$ is a strong structural restriction that naturally brings us to the concept of a tournament. A \textit{tournament} on a set $X$ is an orientation of a complete graph on $X$ without loops. In other words, for every two-element subset $\{a, b\} \subset X$, a tornament contains either $(a, b)$ or $(b, a)$, and never both. We will denote by $\omega$ an arbitrary tournament on $X$. Clearly, $T_\ast$ is fully determined by its values only on $\omega$ and the diagonal $\Delta(X) = \{(x, x): x \in X\}$. This will be expanded upon in Section~\ref{decompositions}.

\section{Principal left ideals of $\M$ and decompositions of pairmorph transformations}\label{decompositions}
Recall that in a monoid $S$, the principal left ideal generated by an element $a \in S$ is the set $Sa = \{xa: x \in S\}$. The left Green's relation $\L$ is defined as $a \L b \iff Sa = Sb$. \\

Similarly to transformation semigroups, where the image of a transformation completely determines its principal left ideal, the pairmorph and the diagonal images describe principal left ideals in $\M$.
\begin{theorem}\label{L_princ_ideals}
    For any $\ast \in \M$, the principal left ideal generated by it is precisely the set
    \[
        \M\ast = \{\dmd \in \M: im(T_\dmd) \subset im(T_\ast) \text{ and } diag(T_\dmd) \subset diag(T_\ast)\}.
    \]
\end{theorem}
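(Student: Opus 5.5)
Via Theorem~\ref{M_iso_to_T_XxX}, $\circ \trg \ast$ corresponds to $T_\ast \circ T_\circ$ (first apply $T_\circ$, then $T_\ast$). So $\M\ast$ corresponds to the set of all $T_\ast T_\circ$ with $T_\circ \in \T^\leftrightarrow$. I will prove the two inclusions separately, working entirely with pairmorph transformations.

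\emph{Necessity.} Let $\dmd = \circ \trg \ast$. Then $im(T_\dmd) = T_\ast(T_\circ(X\times X)) \subset T_\ast(X \times X) = im(T_\ast)$. For the diagonal, $T_\circ$ maps $\Delta(X)$ into $\Delta(X)$. Indeed, by Lemma~\ref{T_ast_criterion}, $T_\circ(x,x) = T_\circ((x,x)^\leftarrow) = (T_\circ(x,x))^\leftarrow$, so $T_\circ(x,x)$ is a fixed point of $^\leftarrow$, i.e.\ a diagonal pair. Hence $diag(T_\dmd) = T_\ast(T_\circ(\Delta(X))) \subset T_\ast(\Delta(X)) = diag(T_\ast)$.

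\emph{Sufficiency.} Given $\dmd$ satisfying both inclusions, I will construct $T \in \T^\leftrightarrow$ with $T_\ast T = T_\dmd$. Then Lemma~\ref{T_ast_criterion} gives $T = T_\circ$ for some $\circ$, and $\dmd = \circ \trg \ast$. Fix a tournament $\omega$ on $X$, so that $X\times X$ is the disjoint union of $\Delta(X)$, $\omega$, and $\omega^\leftarrow$.
\begin{itemize}
\item On the diagonal: for each $(x,x)$, we have $T_\dmd(x,x) \in diag(T_\ast)$, so choose $d \in X$ with $T_\ast(d,d) = T_\dmd(x,x)$ and set $T(x,x) = (d,d)$.
\item On $\omega$: for $(a,b)\in\omega$, we have $T_\dmd(a,b) \in im(T_\ast)$, so choose $(u,v)$ with $T_\ast(u,v) = T_\dmd(a,b)$ and set $T(a,b) = (u,v)$ and $T(b,a) = (v,u)$.
\end{itemize}
The last assignment is consistent, since $T_\ast(v,u) = T_\ast(u,v)^\leftarrow = T_\dmd(a,b)^\leftarrow = T_\dmd(b,a)$. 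By construction $T$ commutes with $^\leftarrow$ on all three parts: on $\Delta(X)$ because diagonal pairs are fixed by $^\leftarrow$, and on $\omega \cup \omega^\leftarrow$ by the paired definition. Also $T_\ast T = T_\dmd$ everywhere.

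The main obstacle is the diagonal, and it explains why the diagonal condition appears in the statement. Diagonal pairs must go to diagonal pairs under any $T\in\T^\leftrightarrow$. Hence the condition $im(T_\dmd)\subset im(T_\ast)$ alone is insufficient: a diagonal value of $T_\dmd$ must be hit by $T_\ast$ from the diagonal, not merely from some off-diagonal pair. The tournament splitting is what makes the choices for the off-diagonal pairs consistent with the symmetry $^\leftarrow$.
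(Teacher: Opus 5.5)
Your proof is correct and follows the paper's argument essentially step for step. Both split $X\times X$ into $\Delta(X)$, $\omega$ and $\omega^\leftarrow$, choose $T_\ast$-preimages on $\omega$ and mirror them on $\omega^\leftarrow$, and choose diagonal preimages from $diag(T_\ast)$. Your explicit check in the necessity direction, that $T_\circ$ maps $\Delta(X)$ into itself because diagonal pairs are exactly the fixed points of $^\leftarrow$, supplies a step the paper only dismisses as ``identical''.
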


\begin{proof}
    We begin by proving the inclusion from left to right. Let $\dmd$ be an element of the principal left ideal of $\ast \in \M$. This means that there is some $\circ \in \M$ with $\dmd = \circ \trg \ast$. Let $(x, y)$ be an element of $im(T_\dmd)$. Then there is some pair $(a, b) \in X \times X$ such that $T_\dmd(a, b) = (x, y)$. Notice that 
    \[
    T_\dmd(a, b) = T_{\circ \trg \ast}(a, b) = T_\ast(T_\circ(a, b)) = (x, y).
    \] Thus, $(x, y)$ also belongs to $im(T_\ast)$. This reasoning is identical for any element $(y, y) \in diag(T_\dmd)$.\\

     Going from right to left, let $\dmd \in \M$ be such that $im(T_\dmd)$ is a subset of $im(T_\ast)$ and $diag(T_\dmd)$ is a subset of $diag(T_\ast)$. We are going to construct a $\circ \in \M$ such that $\dmd = \circ \trg \ast$, showing that $\dmd$ is in the principal left ideal of~$\ast$. We will construct the $\circ$ by defining its $T_\circ$ and utilizing Lemma~\ref{T_ast_criterion}. Since $im(T_\dmd) \subset im(T_\ast)$, for any $(a, b) \in X \times X$ there is at least one pair $(x_{a}, x_{b})$ with $T_\dmd(a, b) = T_\ast(x_a, x_b)$. \\

    Let $\omega$ be the arc set of some tournament on $X$. Set a mapping $\phi: \omega \to X \times X$ such that $\phi(a, b) =(x_a, x_b)$ as discussed above. For every $(a, b) \in \omega$, define $T(a, b) = \phi(a, b)$ and $T(b, a) = (\phi(a, b))^\leftarrow$. \\

    Next, we utilize the fact that $diag(T_\dmd) \subset diag(T_\ast)$ to get that for any $a \in X$, there is an element $x_a \in X$ such that $a \dmd a = x_a \ast x_a$. Using this, define a mapping $\psi: \Delta(X) \to \Delta(X)$ as $\psi(a, a) = (x_a, x_a)$. Also, for any $(a, a) \in \Delta(X)$, set $T(a, a)$ to be equal to $\psi(a, a)$. \\

    Note that by construction $T$ satisfies the conditions of Lemma~\ref{T_ast_criterion}, so there exists $\circ \in \M$ such that $T = T_\circ$. The only thing left is to show that $T_{\circ \trg \ast} = T_\dmd$, which will imply that $\dmd = \circ \trg \ast$.

    For any $(a, b) \in X \times X$, we know the following:
    \begin{itemize}

        \item If $(a, b) \in \omega$, then $T_{\circ \trg \ast}(a,b) = T_\ast(\phi(a, b)) = T_\ast(x_a, x_b) = T_\dmd(a, b);$
        \item If $(a, b) \notin \omega$ and $a \neq b$, then $T_{\circ \trg \ast}(a, b) = T_\ast((\phi(a, b))^\leftarrow) = (T_\ast(\phi(b, a)))^\leftarrow = (T_\dmd(b, a))^\leftarrow = T_\dmd(a, b)$;
        \item If $a = b$, then $T_{\circ \trg \ast}(a, b) = T_\ast(\psi (a, b)) = T_\dmd(a, b)$.
    \end{itemize}

    In all cases, $T_{\circ \trg \ast}(a, b) = T_\dmd(a, b)$, from which we conclude that $\dmd = \circ \trg \ast$. This means that $\dmd$ belongs to the principal left ideal of $\ast$.
\end{proof}

\begin{corollary}\label{L_class_char}
    Two operations $\ast, \star \in \M$ are $\L$--equivalent if and only if $im(T_\ast) = im(T_\star)$ and $diag(T_\ast) = diag(T_\star)$.
\end{corollary}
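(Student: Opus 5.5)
The plan is to derive the corollary directly from Theorem~\ref{L_princ_ideals}, using only the definition of $\L$ and antisymmetry of set inclusion. By definition, $\ast \L \star$ holds exactly when $\M\ast = \M\star$. Theorem~\ref{L_princ_ideals} describes both principal left ideals explicitly through pairmorph and diagonal images, so the whole argument reduces to comparing these images.

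For sufficiency, assume $im(T_\ast) = im(T_\star)$ and $diag(T_\ast) = diag(T_\star)$. Then the defining conditions in Theorem~\ref{L_princ_ideals} are literally the same for $\M\ast$ and for $\M\star$. That is, $\dmd$ satisfies $im(T_\dmd)\subset im(T_\ast)$ and $diag(T_\dmd)\subset diag(T_\ast)$ if and only if it satisfies the corresponding inclusions for $\star$. Hence $\M\ast=\M\star$, and so $\ast \L \star$.

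For necessity, assume $\M\ast = \M\star$. Since $\M$ is a monoid with identity $\circ_{lz}$ (Proposition~\ref{prop-1}), we have $\ast = \circ_{lz}\trg\ast \in \M\ast = \M\star$. Applying Theorem~\ref{L_princ_ideals} to the ideal $\M\star$ gives $im(T_\ast)\subset im(T_\star)$ and $diag(T_\ast)\subset diag(T_\star)$. Swapping the roles of $\ast$ and $\star$ gives the reverse inclusions, and the two equalities follow.

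There is no real obstacle here. The one point to state explicitly is that each generator belongs to its own principal left ideal, and this holds because $\M$ has an identity.
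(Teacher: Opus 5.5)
Your proof is correct and follows the same route as the paper. The paper states this corollary without proof as an immediate consequence of Theorem~\ref{L_princ_ideals}, and your two directions fill in that step: equal images make the two ideal descriptions coincide, and conversely each generator lies in its own principal left ideal (because $\circ_{lz}$ is the identity), which gives the mutual inclusions.
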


The next logical step is to count the number of $\L$--classes and determine their cardinalities. This problem presents a need for a decomposition of pairmorph transformations into simpler, easily enumerable, and independent pieces. In the previous section, we mentioned the fact that a given pairmorph transformation $T_\ast$ is fully determined by its values on some tournament $\omega$ and on $\Delta(X)$, and the proof of Theorem~\ref{L_princ_ideals} utilizes this to build the corresponding $\circ$. This suggests a way to decompose a given binary operation.

\begin{proposition}\label{phi_psi_decomp}
    There is a bijection between $\M$ and all pairs of mappings $\phi: \Delta(X) \to \Delta(X)$ and $\psi: \omega \to X \times X$.
\end{proposition}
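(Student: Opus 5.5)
Note first that the map $T_\ast(a,a) = (a\ast a, a\ast a)$ always lands in $\Delta(X)$. So the restriction of $T_\ast$ to the diagonal is a genuine map $\Delta(X)\to\Delta(X)$. (The statement's names $\phi,\psi$ are swapped relative to the proof of Theorem~\ref{L_princ_ideals}; we keep the statement's labels: $\phi$ on $\Delta(X)$, $\psi$ on $\omega$.) Fix a tournament $\omega$ on $X$ once and for all. Using Theorem~\ref{M_iso_to_T_XxX}, which identifies $\M$ bijectively with the set $\T^\leftrightarrow$ of transformations commuting with $^\leftarrow$, it suffices to build a bijection between $\T^\leftrightarrow$ and the set of pairs $(\phi,\psi)$ with $\phi:\Delta(X)\to\Delta(X)$ and $\psi:\omega\to X\times X$.

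Define $\Phi(T) = (T|_{\Delta(X)}, T|_\omega)$. This is well defined by the remark above: for $T=T_\ast$ the diagonal is mapped into $\Delta(X)$, equivalently for any $T\in\T^\leftrightarrow$ we have $T(a,a)=T((a,a)^\leftarrow)=(T(a,a))^\leftarrow$, so $T(a,a)$ is a fixed point of $^\leftarrow$, i.e. lies in $\Delta(X)$.

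For injectivity, use the partition $X\times X = \Delta(X)\sqcup\omega\sqcup\omega^\leftarrow$, which holds by the definition of a tournament. If $T,T'\in\T^\leftrightarrow$ agree on $\Delta(X)\cup\omega$, then for $(b,a)\in\omega^\leftarrow$ with $(a,b)\in\omega$ we get $T(b,a)=(T(a,b))^\leftarrow=(T'(a,b))^\leftarrow=T'(b,a)$. For surjectivity, given $(\phi,\psi)$ define $T$ on $\Delta(X)$ by $\phi$, on $\omega$ by $\psi$, and on $\omega^\leftarrow$ by $T(b,a)=(\psi(a,b))^\leftarrow$. We then check that $T$ commutes with $^\leftarrow$ on each of the three blocks:
\begin{itemize}
\item on $\Delta(X)$ this holds because $\phi$ takes values in $\Delta(X)$;
\item on $\omega$ and $\omega^\leftarrow$ it holds by construction, using that $^\leftarrow$ is an involution.
\end{itemize}
Lemma~\ref{T_ast_criterion} then yields $\ast$ with $T_\ast=T$.

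The only point needing care is the diagonal: the codomain of $\phi$ must be exactly $\Delta(X)$, both so that $\Phi$ lands in the right set and so that the reconstruction satisfies the condition of Lemma~\ref{T_ast_criterion}. Everything else is bookkeeping over the disjoint partition. As a sanity check, the counts agree: $n^n\cdot(n^2)^{n(n-1)/2}=n^{n^2}=|\M|$.
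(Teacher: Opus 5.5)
Your proof is correct and follows the paper's argument: restrict $T_\ast$ to $\Delta(X)$ and to $\omega$, and invert by extending to $\omega^\leftarrow$ via $T(b,a)=(\psi(a,b))^\leftarrow$. You also make explicit the checks the paper leaves implicit: that $T_\ast(\Delta(X))\subset\Delta(X)$, that the extension commutes with $^\leftarrow$ so Lemma~\ref{T_ast_criterion} applies, and injectivity via the partition $X\times X=\Delta(X)\sqcup\omega\sqcup\omega^\leftarrow$.
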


\begin{proof}
    Given $\ast \in \M$, map it to $\phi$ defined as the restriction of $T_\ast$ to $\Delta(X)$ and $\psi$ defined as the restriction of  $T_\ast$ to $\omega$. Given a pair $(\phi, \psi)$, map it to $T_\ast$ such that $T_\ast$ restricted to $\Delta(X)$ is $\phi$, $T_\ast$ restricted to $\omega$ is $\psi$, and for every element $(b, a) \notin \omega$ set $T_\ast(b, a) = (\psi(a, b))^\leftarrow = (T_\ast(a, b))^\leftarrow$.
\end{proof}

In fact, this decomposition is exactly the kernel-cokernel decomposition introduced in~\cite[Definition~4.2]{Lopez2022}. In that work, $ker(\ast)$ is a binary operation, the pairmorph transformation of which acts as identity on $(X\times X) \setminus \Delta(X)$ and as $\phi$ on $\Delta(X)$; and $cok(\ast)$ is a binary operation, the pairmorph transformation of which acts as identity on $\Delta(X)$ and as defined in the proof of Proposition~\ref{phi_psi_decomp} on $(X \times X) \setminus \Delta(X)$. \\

With intention to keep notation consistent, we denote the mappings $\phi$ and $\psi$ from Proposition~\ref{phi_psi_decomp} as $ker_\ast$ and $cok_\ast$, respectively. We place the binary operation as a bottom index to indicate that these objects are now transformations, which allows us to use the notation $cok_\ast(a, b)$.\\

It is clear that the pairmorph image of $\ast$ is equal to the union of images of its kernel and cokernel. This observation leads to the following combinatorial result.

\begin{theorem}\label{num_L_classes}
    There are $\displaystyle \sum_{d=1}^n \binom{n}{d}\sum_{k=0}^{N}\binom{N + n - d}{k}$ different principal left ideals in $\M$, where $N = |\omega| = \binom{n}{2}$.
\end{theorem}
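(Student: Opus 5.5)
By Corollary~\ref{L_class_char}, principal left ideals of $\M$ are in bijection with the set of pairs $(im(T_\ast), diag(T_\ast))$ that actually occur as $\ast$ ranges over $\M$. So the plan is to describe exactly which pairs $(I, D)$ with $I \subset X\times X$ and $D \subset \Delta(X)$ are realizable, and then count them. The natural tool is the decomposition of Proposition~\ref{phi_psi_decomp}. Together with the remark that $im(T_\ast)$ is the union of the images of $ker_\ast$ and $cok_\ast$ (the latter taken on all of $(X\times X)\setminus\Delta(X)$), it gives
\[
im(T_\ast) = diag(T_\ast) \cup cok_\ast(\omega) \cup \bigl(cok_\ast(\omega)\bigr)^\leftarrow .
\]

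The key reformulation is to work with orbits of the involution $^\leftarrow$ on $X\times X$. There are $n$ singleton orbits $\{(x,x)\}$ and $N=\binom{n}{2}$ two-element orbits $\{(x,y),(y,x)\}$. By Lemma~\ref{T_ast_criterion}, $im(T_\ast)$ is a union of orbits. Set $D = diag(T_\ast)$, which is a nonempty set of $d\ge 1$ diagonal orbits, and let $E$ be the set of orbits contained in $im(T_\ast)$ but not in $D$. Then the pair $(im(T_\ast), diag(T_\ast))$ is determined by, and determines, the pair $(D,E)$. Here $E$ is a subset of the remaining $N + n - d$ orbits: the $n-d$ diagonal orbits outside $D$ together with all $N$ off-diagonal orbits. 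Since every orbit in $E$ must be hit by $cok_\ast$ on the $N$ arcs of $\omega$, we get $|E| = k \le N$.

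The remaining step is the converse: every choice of a nonempty $D$ with $|D|=d$ and any $E$ of $k \le N$ orbits disjoint from $D$ is realized.
\begin{itemize}
\item For the kernel, take any map $\Delta(X)\to\Delta(X)$ with image exactly $D$; this is possible since $D\neq\emptyset$.
\item For the cokernel, pick $k$ arcs of $\omega$ and send them bijectively to one chosen representative pair of each orbit in $E$. A diagonal orbit is represented by $(x,x)$, which is a legitimate value in $X\times X$.
\item Send the remaining $N-k$ arcs into $D$. This uses $D\neq\emptyset$ again, and it also handles $k=0$.
\end{itemize}
Proposition~\ref{phi_psi_decomp} then produces the operation. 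Its image is $D\cup\bigcup E$, because the reversed values $(\psi(a,b))^\leftarrow$ stay inside the same orbits. Its diagonal image is $D$.

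Counting, we choose $D$ in $\binom{n}{d}$ ways and $E$ in $\binom{N+n-d}{k}$ ways, and sum over $1\le d\le n$ and $0\le k\le N$, which is the stated formula. The main point to get right is the orbit bookkeeping. In particular, $E$ may include diagonal orbits outside $D$, because the cokernel can map an arc to some $(x,x)$ with $x\ast x$ not realized on the diagonal. Also, distinct pairs $(D,E)$ genuinely give distinct images, since $D$ is read off from $diag$ and $E$ is the set of remaining orbits of $im$.
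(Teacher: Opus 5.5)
Your proof is correct and follows essentially the same route as the paper: using Corollary~\ref{L_class_char}, you fix the nonempty diagonal image $D$ and count the sets of at most $N$ additional $^\leftarrow$-orbits chosen from the $N+n-d$ available ones. Your explicit orbit bookkeeping and the realizability construction (sending the unused arcs of $\omega$ into $D$) make rigorous what the paper's proof states informally.
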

\begin{proof}
    By Corollary~\ref{L_class_char}, $\L$--classes of $\M$ are determined by the pairmorph image and the diagonal image of its members. Therefore, our goal is to enumerate such unique pairs. 
    
    Assume $D \subset \Delta(X)$ is a fixed diagonal image of some binary operation $\ast \in \M$. This means that $im(ker_\ast) = D$. Hence, the rest of the pairmorph image is determined by $im(cok_\ast)$. Recall that $cok_\ast : \omega \to X\times X$. Thus, we are free to choose from $0$ up to $|\omega| =N$ elements not in $D$ that will enlarge the image. Note that there are only $N + n - |D|$ such elements, because when we map an element from $\omega$ to $(x, y)$, its converse must be mapped to $(y, x)$, hence the number of impactful choices decreases. With all that in mind, the number of such images is
    \[
    \displaystyle \sum_{k=0}^{N} \binom{N + n - |D|}{k}.
    \] The claim follows from applying this logic to every possible nonempty $D \subset \Delta(X)$.
\end{proof}

The proof of Theorem~\ref{num_L_classes} demonstrates a very important fact. When considering the impact of some choice of mapping for $cok_\ast$ on the resulting pairmorph image, some choices are dual. This becomes very cumbersome to work with when attempting to compute the cardinalities of $\L$--classes. Moreover, this behavior is not preserved for pairs at which $\ast$ commutes, further complicating the proofs. 

We address these issues by introducing a new decomposition of the cokernel of a given binary operation into the so-called commutative and anticommutative parts. Define the \textit{anticommutative domain} of a binary operation $\ast \in \M$ as 
\[acdom(T_\ast) = \{(a, b) \in X \times X: T_\ast(a, b) \notin \Delta(X)\}.\]

\begin{definition}\label{comm-acomm-decomp}
    Let $\ast \in \M$ be a binary operation. The \textit{commutative-anticommutative decomposition of the cokernel} of $\ast$ is an ordered triplet of mappings $(ccok_\ast, accok_\ast, ori_\ast)$, where
    \begin{itemize}
        \item $ccok_\ast: \omega \setminus acdom(T_\ast) \to \Delta(X)$ is given by $ccok_\ast(a, b) = T_\ast(a, b)$;
        \item $accok_\ast: acdom(T_\ast) \cap \omega \to \omega$ is given by $accok_\ast(a, b) = \begin{cases}
            T_\ast(a, b), \text{ if } T_\ast(a, b) \in \omega, \\
            T_\ast(b, a), \text{ otherwise};
        \end{cases}$
        \item $ori_\ast: acdom(T_\ast) \cap \omega \to \{0, 1\}$ is given by $ori_\ast(a, b) = \begin{cases}
            0, \text{if } T_\ast(a, b) \in \omega, \\
            1, \text{otherwise}.
        \end{cases}$
    \end{itemize}
\end{definition}

In other words, $accok_\ast$ ``forgets'' the order of output pairs outside of $\Delta(X)$, and $ori_\ast$ allows to reconstruct the orientation back. Therefore, there is a natural bijection between the set of all cokernels of all possible binary operations with a fixed $acdom(T_\ast)$, and all such triplets of mappings. 

\begin{example}\label{decomp_example}
    Let $X  = \{0, 1, 2\}$ and $\omega = \{(0, 1), (0, 2), (1, 2)\}$. Consider a binary operation $\ast \in \M$ given by the Cayley table presented in Table~\ref{tab:cayley1}. Then every part of the decomposition can be seen on the Figure~\ref{figure-1}: dashed arrows representing $accok_\ast$, dotted arrows representing $ccok_\ast$, solid black arrows representing $ker_\ast$, and colored vertices having the value of $ori_\ast$ equal to $1$ at them. The visualisation on Figure~\ref{figure-1} is reffered to as the \textit{pairmorph graph} of a binary operation $\ast \in \M$.

\begin{figure}[ht]
\centering

\begin{minipage}{0.6\textwidth}
\centering
\begin{tikzpicture}[node distance=16mm, scale=0.45,
  main/.style = {draw, circle, minimum size=0.6cm}]
    \node[main] (1) at (0,0) {$(1,1)$};
    \node[main] (2) at (0,4) {$(2,2)$};
    \node[main] (3) at (0,8) {$(0,0)$};
    \node[main] (4) at (5,4) {$(0,1)$};
    \node[main] (5) at (5,8) {$(0,2)$};
    \node[main ,fill=gray] (6) at (5,0) {$(1,2)$};

    \draw[-{Latex[length=1.2mm,width=2.4mm]}]
      (2) edge (1)
      (3) edge (2)
      (4) edge[blue,dotted] (2)
      (5) edge[red,dashed] (4)
      (6) edge[red,dashed] (4)
      (1) edge[in=-110, out=-70, looseness=5] (1);
\end{tikzpicture}

\caption{The illustration of $ker_\ast$ (in solid black), $ccok_\ast$ (dotted), $accok_\ast$ (dashed) and $ori_\ast$ (colored vertex) for $\ast$ from Example~\ref{decomp_example}.}
\label{figure-1}
\end{minipage}
\hfill
\begin{minipage}{0.38\textwidth}
\centering
\begin{tabular}{c|c|c|c}
$\ast$ & $0$ & $1$ & $2$ \\
\hline
$0$ & $2$ & $2$ & $0$ \\
\hline
$1$ & $2$ & $1$ & $1$ \\
\hline
$2$ & $1$ & $0$ & $1$
\end{tabular}

\captionof{table}{The Cayley table for $\ast$ from Example~\ref{decomp_example}.}
\label{tab:cayley1}
\end{minipage}

\end{figure}

For example, to determine the value of $T_\ast(1, 2)$ using the presented graph, one first looks at the head of the arrow pointing from $(1, 2)$, which is $(0, 1)$. Next, because $(1, 2)$ is colored, $ori_\ast(1, 2) = 1$, hence $T_\ast(1, 2) = (0, 1)^\leftarrow = (1, 0)$.

\end{example}

The commutative-anticommutative decomposition of the cokernel makes the process of counting binary operations with a fixed pairmorph image much easier. We simply count all options for $ker_\ast, ccok_\ast, accok_\ast$, and then multiply the result by $2^{|acdom(T_\ast)|}$ to account for all possible $ori_\ast$. \\

As preparation to counting cardinalities of left Green's classes, we build some additional notation. For a binary operation $\ast\in\M$, define the 
\begin{itemize}
    \item \textit{commutative image} of $T_\ast$ as $cim(T_\ast) = im(T_\ast) \cap \Delta(X)$;
    \item \textit{anticommutative image} of $T_\ast$ as $acim(T_\ast) = im(accok_\ast)$;
    \item \textit{diagonal rank} of $T_\ast$ as $drank(T_\ast) = |diag(T_\ast)|$;
    \item \textit{commutative rank} of $T_\ast$ as $crank(T_\ast) = |cim(T_\ast)|$;
    \item \textit{anticommutative rank} of $T_\ast$ as $arank(T_\ast) = |acim(T_\ast)|$;
    \item \textit{commutative defect} of $T_\ast$ as $cdef(T_\ast) = |cim(T_\ast) \setminus diag(T_\ast)| = crank(T_\ast)-drank(T_\ast)$.
\end{itemize}

\begin{theorem}\label{L_class_cardinality}
    Let $\ast \in \M$ be a binary operation with $drank(T_\ast) =d$, $crank(T_\ast) = c$, $arank(T_\ast) = a$. Then the cardinality of its left Green's class is exactly
    \[
        \Surj(n, d)\sum_{r = a}^{N-c + d}\binom{N}{r}\Surj(r, a) 2^r\left( c^{N-r} + \sum_{k = 1}^{c - d}\left((-1)^k\binom{c-d}{k}(c-k)^{N-r}\right)\right),
    \]
    where $N = |\omega| =  \binom{n}{2}$; $\Surj(m, n) = \surj{m}{n}{i}$ is the number of all surjective mappings from an $m$--element set onto an $n$--element set for $n+m>0$, and $\Surj(0, 0):=1$.
\end{theorem}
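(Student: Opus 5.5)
By Corollary~\ref{L_class_char}, the $\L$--class of $\ast$ consists exactly of the operations $\dmd$ with $im(T_\dmd)=im(T_\ast)$ and $diag(T_\dmd)=diag(T_\ast)$. So the plan is to fix $D=diag(T_\ast)$, $C=cim(T_\ast)$ and $A=acim(T_\ast)$, and count the operations $\dmd$ realizing this data. Proposition~\ref{phi_psi_decomp} together with the commutative-anticommutative decomposition (Definition~\ref{comm-acomm-decomp}) gives a bijection between $\M$ and tuples $(ker_\dmd, acdom(T_\dmd)\cap\omega, ccok_\dmd, accok_\dmd, ori_\dmd)$. Hence the count factors into independent choices.

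First, $im(T_\dmd)$ splits as the disjoint union of $D$, $C\setminus D$ and the pairs in $A$ together with their reverses. Since $A\subset\omega$ and the pairs outside $\Delta(X)$ come in reversed couples, $im(T_\dmd)=im(T_\ast)$ holds exactly when three conditions hold:
\begin{enumerate}
\item $im(ker_\dmd)=D$;
\item $D\cup im(ccok_\dmd)=C$;
\item $im(accok_\dmd)=A$.
\end{enumerate}
Here one must check that the orientation $ori_\dmd$ does not affect the image. This holds because $T_\dmd$ on the reversed pair $(b,a)\notin\omega$ supplies the reversed output, so both $(x,y)$ and $(y,x)$ always appear. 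This justifies the free factor $2^r$.

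Now I count. The number of choices of $ker_\dmd:\Delta(X)\to\Delta(X)$ with image exactly $D$ is $\Surj(n,d)$. Let $r=|acdom(T_\dmd)\cap\omega|$. There are $\binom{N}{r}$ ways to choose this set. Given it, there are $\Surj(r,a)$ choices for $accok_\dmd$ onto $A$ and $2^r$ choices for $ori_\dmd$.

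The remaining $N-r$ arcs map into $C$ via $ccok_\dmd$, and their image must cover $C\setminus D$, a set of $c-d$ elements. By inclusion--exclusion over the uncovered subset of $C\setminus D$, the number of such maps is
\[\sum_{k=0}^{c-d}(-1)^k\binom{c-d}{k}(c-k)^{N-r},\]
which is exactly the bracket in the statement, with the $k=0$ term written separately.

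For the summation range, surjectivity of $accok_\dmd$ forces $r\ge a$, and covering $C\setminus D$ forces $N-r\ge c-d$, so $r\le N-c+d$. Terms outside these bounds vanish anyway, so the stated limits are correct.

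The main obstacle is the edge conventions. When $N-r=0$ and $c=d$, the term $0^0$ must equal $1$. Likewise $\Surj(0,0)=1$ is needed for the case $r=a=0$. I would check these conventions explicitly. I would also verify that the decomposition really is a bijection when $acdom$ varies, by taking the pair $(acdom\cap\omega, \text{data})$ as the parameter.
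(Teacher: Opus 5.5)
Your proposal is correct and follows the paper's proof essentially step for step. You fix $D$, $C$, $A$, count kernels by $\Surj(n,d)$, choose $acdom(T_\dmd)\cap\omega$ of size $r$ in $\binom{N}{r}$ ways, count $accok_\dmd$ by $\Surj(r,a)$ and orientations by $2^r$, and count $ccok_\dmd$ by inclusion--exclusion over $C\setminus D$. Your extra checks are welcome additions the paper leaves implicit: that $ori_\dmd$ does not affect the image, and that the terms outside the summation range vanish. One small correction: no $0^0$ ever arises in the bracket, since $c-k\ge d\ge 1$ for every $k\le c-d$.
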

\begin{proof}
    Let $\ast \in \M$ be a binary operation with $diag(T_\ast) = D$, $cim(T_\ast) = C$, $acim(T_\ast) = A$. Then $|D| = d$, $|C| = c$, $|A| = a$. We count how many binary operations also have the same diagonal, commutative and anticommutative images. For this, we count all options for the $ker_\ast$, $ccok_\ast$, $accok_\ast$, and $ori_\ast$. The kernel of $\ast$ must be a surjective mapping of $\Delta(X)$ onto $D$, hence there are $\Surj(n, d)$ possible kernels. \\
    
    We now have the freedom to choose the anticommutative domain and define $cok_\ast$ and $accok_\ast$. However, there are still $cdef(T_\ast) = c - d$ elements left in $C$, that are not in the $cim(T_\ast)$ yet, thus the anticommutative domain has the cardinality of at most $N - c + d$. Moreover, at least $a$ elements must be in the $acdom(T_\ast)$, because $|acim(T_\ast)| = a$. For every possible cardinality $r$ of the $acdom(T_\ast)$, there are $\binom{N}{r}$ choices for the domain. For each of these choices, $accok_\ast$ must be a surjective mapping of $acdom(T_\ast)$ onto $A$, hence there are 
    \[
    \sum_{r = a}^{N-c+d}\binom{N}{r}\Surj(r, a)
    \]
    options for $accok_\ast$. We multiply each term of that sum by $2^r$ to account for all possible $ori_\ast$. The only thing left to define is the $ccok_\ast$. which must be a mapping from $\omega \setminus acdom_(T_\ast)$ to $C$. Note that it does not have to be surjective, because $D \subset C$ is already in the pairmorph image. However, $ccok_\ast$ must have a nonempty preimage for every element in $C \setminus D$. We utilize an argument built on the inclusion-exclusion principle to conclude that the total number of such mappings is
    \[
        c^{N-r} + \sum_{k = 1}^{c - d}\left((-1)^k\binom{c-d}{k}(c-k)^{N-r}\right).
    \]

    The claim follows by applying the product rule.
 \end{proof}

\begin{example}
    Consider the operation $\ast$ from Example~\ref{decomp_example}. Then $diag(T_\ast) = \{(1, 1), (2, 2)\}$, $cim(T_\ast) = \Delta(X)$ and $acim(T_\ast) = \{(0, 1)\}$. We count in how many ways one can define an operation $\circ$ with the same pairmorph image and diagonal. There are $\Surj(3, 2) = 6$ options for $ker_\circ$. Then, because $cdef(T_\ast) = 1$ and $arank(T_\ast) = 1$, the cardinality of the anticommutative domain of $\circ$ must be from $1$ up to $2$.
    \begin{enumerate}
        \item If $|acdom(T_\circ)| = 1$, then we have $\binom{3}{1} = 3$ possible anticommutative domains, each with only one way to define the $accok_\circ$, and $2$ ways to define $ori_\circ$, hence there are $6$ options for $accok_\circ$ and $ori_\circ$ in this case. Then, the $ccok_\circ$ must map $2$ elements to $\Delta(X)$, with the preimage of $(0, 0)$ necessarily being nonempty, since $(0, 0) \in (cim(T_\circ) \setminus diag(T_\circ))$.

        If the first element is mapped to $(0, 0)$, there are $3$ options for the destination of the second one. The same logic applies to the second one. However, we must subtract $1$, since we counted the mapping where both elements go to $(0, 0)$ twice. Henceforth, there are $5$ option for $ccok_\circ$ in total. Therefore, the cokernel can be defined in $6 \cdot 5 = 30$ ways.
        
        \item If $|acdom(T_\circ)| = 2$, then we have $\binom{3}{2} = 3$ possible anticommutative domains, each with only one possible $accok_\circ$, and with $2^2 = 4$ options for $ori_\circ$. Therefore, there are $12$ options for this pair of mappings. Then there is only one valid choice of the commutative part of the cokernel, because the only commuting pair must be sent to $(0, 0)$.
        
    \end{enumerate}
    Both cases give us $30 + 12 = 42$ possible cokernels. Therefore, $\circ$ can be defined in $6 \cdot 42 = 252$ ways.

    \begin{example}\label{card_L_class_circ_lz} Let $X$ be of arbitrary finite cardinality $n$, and let $\omega$ be a tournament on $X$. We compute the cardinality of the $\L$--class of the left zeroes operation $\circ_{lz}$. Since it is the identity of $\M$, any element $\ast \in \M$ from its left Green's class will have $\M\ast = \M$. Therefore, the left Green's class of $\circ_{lz}$ corresponds to the set of all left-invertible elements of $\M$. But left-invertible elements of the full transformation semigroup are injective, thus in our finite case, they are bijective. Because $\M$ embeds into $\T_{X \times X}$, and since the inverse of a bijective pairmorph transformation is also a pairmorph transformation, the $\L$--class of $\circ_{lz}$ is precisely the set of all units of $\M$, denoted in \cite{Lopez2022} by $U(\M)$.

    Applying our formula for $d = c = n$ and $a = N$ results in the cardinality of $U(\M)$ being equal to
    \[
        \Surj(n, n) \binom{N}{N}\Surj(N, N)2^N\left(n^{N - N} + 0\right) = n! \cdot 1 \cdot N! \cdot 2^N \cdot 1 = n!\binom{n}{2}!2^{\binom{n}{2}},
    \] which is precisely the result obtained in \cite[Theorem~3.6]{Lopez2022}.
        
    \end{example}
    
\end{example}
\section{The equivalence relation $\pi_\ast$}
So far the pairmorph transformation perspective has given us a complete description and enumeration of principal left ideals. Aiming to perform the same for principal right ideals, we transfer an important tool from transformation semigroup theory into our context. We reuse the existing notation \cite[page~47]{GanyushkinMazor2009}, and define an equivalence relation on $X \times X$ induced by an operation $\ast \in \M$.

\begin{definition}
    For a binary operation $\ast \in \M$, define the equivalence relation $\pi_\ast$ on $X \times X$ in the following way:
    \[
        (a, b) \pi_\ast (u, v) \iff T_\ast(a, b) = T_\ast(u, v)\iff \begin{cases}
            a \ast b = u \ast v,\\
            b \ast a = v \ast u.
        \end{cases}
    \]
\end{definition}

In other words, $\pi_\ast$ induces a partition of $X \times X$ into the preimages of $T_\ast$. This relation coincides with the $\pi_\alpha$ relation used in transformation semigroup theory.
However, our context restricts the structure of this relation. Since $T_\ast$ always ``respects'' the $^\leftarrow$ operation, we know that if $(a, b) \pi_\ast (u, v)$, then  it must always be the case that $(b,a) \pi_\ast (v, u)$ as well. It turns out that this simple property limits all equivalence classes of $\pi_\ast$ to just two types.

\begin{definition}
    Let $C \subset X \times X$ be an equivalence class of $\pi_\ast$. We call $C$ a
    \begin{itemize}
        \item \textit{commutative $\pi_\ast$--class} if for every $(a, b) \in C$, the pair $(b, a)$ also belongs to $C$;
        \item \textit{anticommutative $\pi_\ast$--class} if for every $(a, b) \in C$, the pair $(b, a)$ does not belong to~$C$.
    \end{itemize}
\end{definition}

\begin{proposition}\label{comm-acomm-class-crit}
    Let $\ast \in \M$ be an arbitrary binary operation. Then the $\pi_\ast$--class of a pair $(a, b) \in X \times X$ is commutative if and only if $\ast$ commutes at $(a, b)$, and it is anticommutative if and only if $\ast$ does not commute at $(a, b)$. 
\end{proposition}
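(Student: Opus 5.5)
The plan is to work directly with the pairmorph transformation. "$\ast$ commutes at $(a,b)$" means $a\ast b = b\ast a$, that is, $T_\ast(a,b)\in\Delta(X)$. The whole argument rests on one observation. For any $(a,b)$, the pair $(b,a)$ is $\pi_\ast$--related to $(a,b)$ if and only if $T_\ast(b,a)=T_\ast(a,b)$. By Lemma~\ref{T_ast_criterion}, $T_\ast(b,a)=(T_\ast(a,b))^\leftarrow$, and a pair $(x,y)$ equals its reversal exactly when $x=y$. Hence $(b,a)\in[(a,b)]_{\pi_\ast}$ if and only if $T_\ast(a,b)\in\Delta(X)$, which is exactly the condition that $\ast$ commutes at $(a,b)$.

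Next I will show that the property ``$\ast$ commutes at the pair'' is constant on each $\pi_\ast$--class. If $(u,v)\,\pi_\ast\,(a,b)$, then $T_\ast(u,v)=T_\ast(a,b)$, so one of these values lies in $\Delta(X)$ exactly when the other does. Combining this with the observation above gives the dichotomy. If $\ast$ commutes at $(a,b)$, then every $(u,v)$ in the class satisfies $(v,u)\in$ the class, so the class is commutative. If $\ast$ does not commute at $(a,b)$, then no member $(u,v)$ of the class has $(v,u)$ in the class, so the class is anticommutative.

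These two implications, together with the fact that each class is either entirely commuting or entirely non-commuting, give both ``if and only if'' statements. They also show that every $\pi_\ast$--class is of one of the two types, and the types are mutually exclusive because a class is nonempty. There is one subtlety: a diagonal pair $(a,a)$ satisfies $(a,a)^\leftarrow=(a,a)$. This is consistent with the argument, since $\ast$ trivially commutes at $(a,a)$ and $T_\ast(a,a)\in\Delta(X)$, so such a pair lies in a commutative class.

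I do not expect any real obstacle. The only step that needs care is checking that the commuting property is a class invariant, and that follows because $\pi_\ast$ is defined as equality of $T_\ast$-values.
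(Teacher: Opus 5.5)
Your proof is correct and is essentially the paper's argument. Both rest on the observation that $(b,a)\,\pi_\ast\,(a,b)\iff a\ast b=b\ast a$. The only cosmetic difference is how this is spread across the class: you note that $T_\ast$, and hence membership in $\Delta(X)$, is constant on the class, whereas the paper combines transitivity with the implication $(u,v)\,\pi_\ast\,(a,b)\Rightarrow(v,u)\,\pi_\ast\,(b,a)$.
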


\begin{proof}
    Let $C$ be an equivalence class of some $(a, b) \in X \times X$.
    \begin{enumerate}
        \item Let $\ast$ commute at $(a, b)$. Then we know that $T_\ast(a, b) = T_\ast(b,a)$ meaning that $(a,b) \pi_\ast(b ,a)$. Consider any $(u, v) \in C$. We also know that $(v, u) \pi_\ast (b, a)$, and since $\pi_\ast$ is transitive and $(b, a) \pi_\ast (a, b)$, we get that $(v, u)$ also belongs to $C$. Thus, $C$ is a commutative class.

        Now let $C$ be a commutative class of some $(a, b) \in X \times X$. Since $C$ is commutative, we know that $(a,b) \pi_\ast (b, a)$. This means that $a \ast b = b \ast a$, so $\ast$ commutes at $(a, b)$. \\

        \item Let $\ast$ not commute at $(a, b)$. Then we know that $a \ast b \neq b \ast a$. Because of this, $(b, a) \notin C$. Now for any $(u, v) \in C$, we have $(v, u) \pi_\ast (b, a)$, and $(b, a) \notin C$. Because $\pi_\ast$ is an equivalence relation, $(v, u)$ does not belong to $C$. Thus, $C$ is an anticommutative class.
        
        Now let $C$ be an anticommutative class of some $(a, b)$. Then $(b, a) \notin C$. From this it follows that $a \ast b \neq b \ast a$, hence $\ast$ does not commute at $(a, b)$.
    \end{enumerate}
        
\end{proof}

The proof of the second part of Proposition~\ref{comm-acomm-class-crit} hints at another crucial property of the relation $\pi_\ast$.

\begin{definition}
    For a $\pi_\ast$--class $C$, define its \textit{mirror class} as $C^\leftarrow := \{(b, a) : (a, b) \in C\}$. 
\end{definition}

\begin{proposition}\label{pi_ast_contains_all_mirror_classes}
    For any $\ast \in \M$, the relation $\pi_\ast$ contains all mirror classes. In other words, for every $\pi_\ast$--class $C$, its mirror class $C^\leftarrow$ is also a $\pi_\ast$--class.
\end{proposition}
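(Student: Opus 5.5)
The plan is to show directly that $C^\leftarrow$ is exactly one full preimage of $T_\ast$. Recall that the $\pi_\ast$--classes are precisely the nonempty fibres $T_\ast^{-1}(p)$, for $p \in im(T_\ast)$. Fix a $\pi_\ast$--class $C$. Then there is a pair $p \in X \times X$ with $C = \{(a,b) \in X\times X : T_\ast(a,b) = p\}$.

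The main step is to prove
\[
C^\leftarrow = \{(u,v) \in X \times X : T_\ast(u,v) = p^\leftarrow\},
\]
by double inclusion. Both directions use the commutation property of Lemma~\ref{T_ast_criterion}, namely $T_\ast((a,b)^\leftarrow) = (T_\ast(a,b))^\leftarrow$, together with the fact that $^\leftarrow$ is an involution.

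For the inclusion from left to right, take $(b,a) \in C^\leftarrow$ with $(a,b) \in C$. Then
\[
T_\ast(b,a) = (T_\ast(a,b))^\leftarrow = p^\leftarrow .
\]
For the reverse inclusion, take $(u,v)$ with $T_\ast(u,v) = p^\leftarrow$. Then
\[
T_\ast(v,u) = (p^\leftarrow)^\leftarrow = p,
\]
so $(v,u) \in C$, and therefore $(u,v) = (v,u)^\leftarrow \in C^\leftarrow$.

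It remains to check that this fibre is genuinely a $\pi_\ast$--class, that is, that it is nonempty. This holds because $C$ is nonempty, so $C^\leftarrow$ is nonempty as well; equivalently, $p^\leftarrow = T_\ast(b,a)$ lies in $im(T_\ast)$.

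I do not expect a real obstacle here. The only point needing care is to state explicitly that $\pi_\ast$--classes are exactly the nonempty fibres of $T_\ast$, so that exhibiting $C^\leftarrow$ as such a fibre finishes the argument. As a sanity check, this recovers Proposition~\ref{comm-acomm-class-crit}. If $C$ is commutative, then $p = p^\leftarrow$, so $p \in \Delta(X)$ and $C^\leftarrow = C$. If $C$ is anticommutative, then $p \neq p^\leftarrow$, and $C^\leftarrow$ is a different class, disjoint from $C$.
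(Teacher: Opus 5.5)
Your proof is correct and follows essentially the same route as the paper: both use $T_\ast((a,b)^\leftarrow) = (T_\ast(a,b))^\leftarrow$ and the fact that $^\leftarrow$ is an involution to show, by double inclusion, that $C^\leftarrow$ is exactly the full fibre of $T_\ast$ over $(T_\ast(a,b))^\leftarrow$. The only difference is cosmetic: the paper first places $C^\leftarrow$ inside some class $C'$ and then shows $C' \subset C^\leftarrow$, whereas you name the fibre $T_\ast^{-1}(p^\leftarrow)$ explicitly.
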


\begin{proof}
    Let $C$ be the $\pi_\ast$--class of some $(a, b) \in X \times X$, and let $(x, y) = T_\ast(a, b)$. For any $(u, v) \in C$, we have $T_\ast(u, v) = (x, y)$. It follows that $T_\ast(v, u)= (y, x)$. Since the choice of $(u, v)$ was arbitrary, we conclude that $C^\leftarrow$ is a subset of some $\pi_\ast$--class $C'$. Now we show that actually $C' = C^\leftarrow$.\\
    
    Let $(v, u) \in C'$. Then $T_\ast(v, u) = (y, x)$, so $T_\ast(u, v) = (x, y)$. But then $(u, v) \in C$, thus $(v, u) \in C^\leftarrow$. Therefore, $C' = C^\leftarrow$, completing the proof.
\end{proof}

This property turns out to be the defining feature of $\pi_\ast$ relations. It encapsulates the limitation of classes to only commutative and anticommutative presented in Proposition~\ref{comm-acomm-class-crit}, as well as leads to a full characterization of all such relations.

\begin{theorem}\label{pi_ast_criterion}
    A given equivalence relation on $X \times X$ is the $\pi_\ast$ relation of some $\ast \in \M$ if and only if it contains all its mirror classes and there are no more than $|X|$ commutative classes.    
\end{theorem}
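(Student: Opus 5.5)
Necessity follows from the earlier propositions, and sufficiency from an explicit construction of $T$ via Lemma~\ref{T_ast_criterion}.

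\emph{Necessity.} If the relation is $\pi_\ast$, Proposition~\ref{pi_ast_contains_all_mirror_classes} already gives closure under mirror classes. For the bound, take a commutative class $C$ and a pair $(a,b)\in C$. By Proposition~\ref{comm-acomm-class-crit}, $\ast$ commutes at $(a,b)$, so $T_\ast(a,b)=(x,x)$ for some $x\in X$. Distinct classes have distinct images under $T_\ast$, so $C\mapsto T_\ast(C)$ injects the set of commutative classes into $\Delta(X)$. Hence there are at most $|X|$ commutative classes.

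\emph{Sufficiency.} Let $\pi$ be an equivalence relation on $X\times X$ that contains all its mirror classes and has at most $n$ commutative classes. First I check that every class $C$ is either commutative or anticommutative. Since $C^\leftarrow$ is a class, either $C^\leftarrow=C$ or $C^\leftarrow\cap C=\emptyset$. In the first case $(a,b)\in C$ implies $(b,a)\in C$. In the second case $(b,a)\notin C$ for every $(a,b)\in C$. Diagonal pairs satisfy $(a,a)^\leftarrow=(a,a)$, so a class containing a diagonal pair meets its mirror and is therefore commutative.

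The classes now split into commutative classes $C_1,\dots,C_m$ with $m\le n$, and anticommutative classes, which come in mirror pairs $\{D_j,D_j^\leftarrow\}$ for $j=1,\dots,p$. I build $T$ as follows.
\begin{itemize}
\item Choose distinct $x_1,\dots,x_m\in X$ and send all of $C_i$ to $(x_i,x_i)$.
\item For each mirror pair, choose an off-diagonal pair $(u_j,v_j)$, with all $2p$ pairs $(u_j,v_j),(v_j,u_j)$ distinct. Send $D_j$ to $(u_j,v_j)$ and $D_j^\leftarrow$ to $(v_j,u_j)$.
\end{itemize}
This is possible because there are exactly $N$ unordered off-diagonal pairs, and $p\le N$. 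Indeed, the $D_j$ are pairwise disjoint and contain no diagonal pair. Each mirror pair $D_j\cup D_j^\leftarrow$ is closed under $^\leftarrow$, so it contains at least one full two-element set $\{(a,b),(b,a)\}$ with $a\neq b$, and there are only $N$ such sets.

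$T$ commutes with $^\leftarrow$ by construction. On a commutative class, $T(a,b)=(x,x)=T(b,a)$ and $(b,a)$ lies in the same class. On $D_j$ the mirror pair lands in $D_j^\leftarrow$, whose image is the reversed pair. By Lemma~\ref{T_ast_criterion}, $T=T_\ast$ for some $\ast\in\M$. All chosen image points are distinct: diagonal points differ from off-diagonal ones, and the choices within each group were made distinct. So the fibres of $T$ are exactly the classes of $\pi$, and $\pi_\ast=\pi$.

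The main obstacle is this counting step, namely guaranteeing enough distinct target points. On the diagonal side it uses the hypothesis $m\le n$. On the off-diagonal side it uses $p\le N$, which I get from the disjointness argument above. The hypothesis $m\le n$ is essential: an arbitrary partition closed under mirrors can have up to $n+N$ commutative classes, for example the partition into the singletons $\{(a,a)\}$ and the sets $\{(a,b),(b,a)\}$.
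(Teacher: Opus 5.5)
Your proof is correct and follows essentially the same route as the paper. For necessity, you inject the commutative classes into $\Delta(X)$. For sufficiency, you send commutative classes injectively to diagonal points and mirror pairs of anticommutative classes injectively to distinct unordered off-diagonal pairs; this is the paper's injection $\psi:\mathcal{A}\to\omega$, and your bound $p\le N$ plays the role of the paper's $|\mathcal{A}|\le\binom{|X|}{2}$.

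The differences are cosmetic. You define $T$ directly and invoke Lemma~\ref{T_ast_criterion}, rather than assembling $ker_\ast$, $ccok_\ast$, $accok_\ast$, $ori_\ast$ over a tournament containing $M$. You also explicitly verify two facts the paper uses tacitly: that every class is either commutative or anticommutative, and that diagonal pairs lie in commutative classes.
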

\begin{proof}
    Let $\ast \in \M$. Each commutative class of $\pi_\ast$ is precisely $T_{\ast}^{-1}(y, y)$ for some $y \in X$. Since there are no more than $|X|$ pairs $(y, y)$, the relation $\pi_\ast$ contains no more than $|X|$ commutative classes. The fact that $\pi_\ast$ contains all its mirror classes follows directly from Proposition \ref{pi_ast_contains_all_mirror_classes}. This shows the necessity of the condition. \\
 
    Let us now prove the sufficiency of this condition. Let $R$ be an equivalence relation on $X \times X$ such that our conditions are satisfied. We are going to build a binary operation $\ast \in \M$ by defining its kernel and cokernel such that $\pi_\ast$ will be equal to $R$.
  
    First, we partition $(X\times X)/R$ into three disjoint sets:
    \[
        (X \times X)/R = \mathcal{Y} \sqcup \mathcal{A} \sqcup \mathcal{B},
    \] 
    where $\mathcal{Y}$ is the set of all commutative classes of $R$, and $\mathcal{A}$ and $\mathcal{B}$ are such that $C \in \mathcal{A} \iff C^\leftarrow \in \mathcal{B}$. Note that this partitioning is not unique since the choice of $\mathcal{A}$ and $\mathcal{B}$ is not unique.

    To properly define every part of the commutative-anticommutative decomposition of the cokernel of $\ast$, we need to determine what $acdom(T_\ast)$ is going to be. Denote 
    \[
    M = \bigcup_{A \in \mathcal{A}}A; \; K = \bigcup_{Y \in \mathcal{Y}}Y,
    \]
    and set $acdom(T_\ast) = M \cup M^\leftarrow$. Note that $M^\leftarrow$ is precisely the union of all classes from $\mathcal{B}$. Let $\omega$ be the arc set of any tournament on $X$ that fully contains $M$. In other words, we assume that all elements of $M$ are in the ``default'' order. \\

    Because of our choice of $\omega$, the anticommutative part of the cokernel of $\ast$ is going to be a mapping from $M$ to $\omega$, and the commutative part will be a mapping from $K \setminus \Delta(X)$ to $\Delta(X)$. The kernel is going to be a mapping from $\Delta(X)$ to itself. The $ori_\ast$ can be defined as a constant zero. \\

    To make these mappings ``agree'' inside the same classes, we are interested in existence of two injections $\phi: \mathcal{Y} \to \Delta(X)$ and $\psi: \mathcal{A} \to \omega$. To show that such mappings exist, we analyze the cardinalities of $\mathcal{Y}$ and $\mathcal{A}$.

    The cardinality of $\mathcal{Y}$ is at most $|\Delta(X)| = |X|$ by our conditions. To reason about cardinalities of $\mathcal{A}$ and $\mathcal{B}$, note that the mapping $C \mapsto C^\leftarrow$ gives a bijection between $\mathcal{A}$ and $\mathcal{B}$, so $|\mathcal{A}| = |\mathcal{B|}$.

    The biggest in terms of the number of classes equivalence relation on $X \times X$ is the diagonal $\Delta(X\times X)$, which has $2\binom{|X|}{2}$ anticommutative classes, because every single ordered pair of distinct elements forms its own class. Thus, $|\mathcal{A} \cup \mathcal{B}| \le 2\binom{|X|}{2}$. Knowing that, and having $|\mathcal{A}| = |\mathcal{B}| = \frac12|\mathcal{A \cup B}|$, we can conclude that 
    \[
        |\mathcal{A}| = \frac12|\mathcal{A \cup B|} \leq \frac122\binom{|X|}{2} = \binom{|X|}{2} = |\omega|.
    \]

    Now we are sure that $\mathcal{|Y|} \leq |\Delta(X)|$ and $|\mathcal{A}| \leq |\omega|$, so finding injections $\phi$ and $\psi$, that we described above, is possible. With that in mind, we define every part of the commutative-anticommutative decomposition of $\ast$ by setting
    \[
    ker_\ast (a, a) = \phi(R(a, a)); \; ccok_\ast(a, b) = \phi(R(a, b)); \; accok_\ast(a, b) = \psi(R(a, b)); \; ori_\ast(a, b) = 0,
    \] where $R(a, b)$ denotes the $R$-equivalence class of $(a, b)$.

    Observe that by construction of $\ast$ for any $(a, b) \in im(T_\ast)$, we have $T_{\ast}^{-1}(T_\ast(a, b)) = R(a,b)$. Therefore, $\pi_\ast$ coincides with $R$.
   
 \end{proof}

\begin{example}
    Consider the set $X = \{0, 1, 2\}$ and an equivalence relation on $X\times X$ with the following classes:
    \[
    C_0 = \{(0, 0), (1, 1), (2, 2)\}; \; C_1 = \{(1, 0), (2, 1)\}; \; C_1^\leftarrow = \{(0, 1), (1, 2)\};
    \]
    \[
    C_2 = \{(2, 0)\}; \; C_2^\leftarrow = \{(0, 2)\}.
    \]

    This partition contains all mirror classes and has only one commutative class. Choosing $\mathcal{A} = \{C_1, C_2\}$ and setting $\phi(C_0) = (0, 0), \psi(C_1) = (1, -1), \psi(C_2) = (2, -2)$ produces the subtraction operation on $\mathbb{Z}_3$.
\end{example}

\section{Principal right ideals of $\M$}
With our new tool being set up and fully characterized, we are ready to describe arbitrary principal right ideals. Recal that similarly to principal left ideals, in a monoid $S$, the principal right ideal generated by an element $a \in S$ is the set $aS$ of all right products, and the right Green's relation is defined as $a \R b \iff aS = bS$. \\

An attentive reader may notice that the next characterization is identical to the one in transformation semigroups. However, we decided to prove it from scratch, instead of taking the characterization from $\T_{X \times X}$ and proving that it is appropriate in $\M$. We believe this provides a better understanding of the nature of right ideals in $\M$.

 \begin{theorem}
     For any $\ast \in \M$, the principal right ideal generated by it is precisely the set
     \[
        \ast\M = \{\dmd \in \M: \pi_\dmd \supset \pi_\ast\}.
     \]
 \end{theorem}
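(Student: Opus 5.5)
We prove the two inclusions separately. Throughout we use the convention of Theorem~\ref{M_iso_to_T_XxX}, namely $T_{\ast \trg \circ} = T_\circ \cdot T_\ast$, which means that $T_\ast$ is applied first.

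\emph{The inclusion from left to right.} Let $\dmd = \ast \trg \circ$ for some $\circ \in \M$. Suppose $(a,b)\,\pi_\ast\,(u,v)$, so that $T_\ast(a,b) = T_\ast(u,v)$. Applying $T_\circ$ to both sides gives $T_\dmd(a,b) = T_\circ(T_\ast(a,b)) = T_\circ(T_\ast(u,v)) = T_\dmd(u,v)$. Hence $(a,b)\,\pi_\dmd\,(u,v)$, and therefore $\pi_\ast \subset \pi_\dmd$.

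\emph{The inclusion from right to left.} Let $\pi_\ast \subset \pi_\dmd$. We will construct a transformation $T$ of $X\times X$ that commutes with $^\leftarrow$ and satisfies $T\circ T_\ast = T_\dmd$. By Lemma~\ref{T_ast_criterion}, $T = T_\circ$ for some $\circ\in\M$, and then $\dmd = \ast\trg\circ \in \ast\M$.

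The definition of $T$ on $im(T_\ast)$ is forced. If $(x,y) = T_\ast(a,b)$, we set $T(x,y) = T_\dmd(a,b)$. This does not depend on the choice of $(a,b)$, because any two preimages of $(x,y)$ are $\pi_\ast$-related, hence $\pi_\dmd$-related, and so have the same $T_\dmd$-image. On this set $T$ commutes with $^\leftarrow$. Indeed, $im(T_\ast)$ is closed under $^\leftarrow$, since $(y,x) = T_\ast(b,a)$, and
\[
T(y,x) = T_\dmd(b,a) = (T_\dmd(a,b))^\leftarrow = (T(x,y))^\leftarrow .
\]

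We expect the only delicate point to be the \emph{diagonal compatibility}. Any $T$ commuting with $^\leftarrow$ must send a diagonal pair $(x,x)$ to a diagonal pair. So suppose $(x,x) = T_\ast(a,b)$. Then $T_\ast(a,b) = T_\ast(b,a)$, that is, $(a,b)\,\pi_\ast\,(b,a)$, hence $(a,b)\,\pi_\dmd\,(b,a)$. Consequently
\[
T_\dmd(a,b) = T_\dmd(b,a) = (T_\dmd(a,b))^\leftarrow ,
\]
so $T_\dmd(a,b)$ is diagonal, as required. (Equivalently, by Proposition~\ref{comm-acomm-class-crit}, commutative $\pi_\ast$-classes lie inside commutative $\pi_\dmd$-classes.)

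Off $im(T_\ast)$ we extend $T$ freely but compatibly. On the diagonal pairs not in $im(T_\ast)$ we set $T(x,x) = (x,x)$. For the remaining off-diagonal pairs, we pick one representative from each pair $\{(x,y),(y,x)\}$ that avoids $im(T_\ast)$, define $T$ arbitrarily there, and set the value at the mirror pair by $^\leftarrow$. Such pairs avoid $im(T_\ast)$ together, because $im(T_\ast)$ is closed under $^\leftarrow$.

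The resulting $T$ commutes with $^\leftarrow$ everywhere. By construction $T(T_\ast(a,b)) = T_\dmd(a,b)$ for all $(a,b)$, which completes the argument.
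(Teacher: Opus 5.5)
Your proof is correct and follows essentially the same route as the paper. The left-to-right inclusion is identical, and your forced definition of $T$ on $im(T_\ast)$ is exactly the paper's $y_1 \circ y_2 = \dmd(\phi(y_1,y_2))$; the paper picks the preimage with a choice function, whereas you note directly that the value does not depend on it. The only real difference is that the paper defines $\circ$ as an operation from the start, so it never has to check $^\leftarrow$-compatibility or invoke Lemma~\ref{T_ast_criterion}. Your separate ``diagonal compatibility'' check is just the case $x=y$ of the identity $T(y,x) = (T(x,y))^\leftarrow$ that you already proved on $im(T_\ast)$, so it is redundant but harmless.
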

 \begin{proof}
    We first prove the inclusion from left to right. Let $\dmd \in \ast\M$ be an element of the principal right ideal generated by $\ast$. Then we know that $\dmd = \ast \trg \circ$ for some $\circ \in \M$.
    
    Suppose $(a, b) \pi_\ast (u, v)$ are two elements of $X \times X$ that are $\pi_\ast$--equivalent. Then $T_\ast(a, b) = T_\ast(u,v)$. From this we can conclude that 
    \[T_\dmd(a, b) = T_{\ast \trg \circ} (a,b) = T_\circ(T_\ast(a, b)) = T_\circ(T_\ast(u, v)) = T_{\ast \trg \circ}(u, v) = T_\dmd(u,v).\] 
    Therefore, $(a, b) \pi_\dmd (u, v)$. Thus, $\pi_\ast \subset \pi_\dmd$. \\
    
    Now going from right to left, let $\dmd \in \M$ be an arbitrary binary operation with $\pi_\ast \subset \pi_\dmd$. Our goal is to show that such $\dmd$ is always in the principal right ideal generated by $\ast$. For this, we need to construct an operation $\circ \in \M$ with $\dmd = \ast \trg \circ$.

    Consider the partition $(X \times X)/\pi_\ast$ of $X \times X$ into equivalence classes of $\pi_\ast$. Now take any choice function on $(X \times X)/\pi_\ast$ denoted by $c: (X \times X)/\pi_\ast \rightarrow X \times X$. Using this function, we define a mapping $\phi: im(T_\ast) \rightarrow X\times X$ by putting $\phi(x_1, x_2) = c(T^{-1}_\ast(x_1, x_2))$. Note that the preimage $T^{-1}_\ast(x_1, x_2)$ is always an element of $(X \times X)/\pi_\ast$. \\
    
    Consider an arbitrary pair $(y_1, y_2) \in im(T_\ast)$ and set $y_1 \circ y_2 = \dmd(\phi(y_1, y_2))$. For all pairs not in $im(T_\ast)$, define $\circ$ arbitrarily. Now, for any $a, b \in X$, we have 
    \[
    a (\ast \trg \circ) b = \circ(T_\ast(a, b)) = \dmd(\phi(T_\ast(a, b))).
    \] 

    To show that this is really equal to $a \dmd b$, we are going to use the fact that $\pi_\ast \subset \pi_\dmd$. Notice that 
    \[
        T_\ast(\phi(T_\ast(a, b))) = T_\ast(c(T^{-1}_\ast(T_\ast(a,b)))) = T_\ast(a, b).
    \]

    Therefore, $(a, b)$ and $\phi(T_\ast(a, b))$ are $\pi_\ast$--equivalent. Because of our conditions, $(a, b)$ and $\phi(T_\ast(a, b))$ are also $\pi_\dmd$--equivalent, thus $a \dmd b = \dmd(\phi(T_\ast(a, b))) = a (\ast \trg \circ) b$. This way, $\dmd = \ast \trg \circ$, completing the proof.

 \end{proof}

\begin{corollary}\label{R-equiv-criterion}
    Two operations $\ast, \star \in \M$ are $\R$--equivalent if and only if $\pi_\ast = \pi_\star$.
\end{corollary}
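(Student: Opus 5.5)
The plan is to derive the corollary directly from the preceding theorem on principal right ideals, which states that $\ast\M = \{\dmd \in \M : \pi_\dmd \supset \pi_\ast\}$. By definition, $\ast \R \star$ holds precisely when $\ast\M = \star\M$. The argument therefore splits into two directions, each a set-theoretic manipulation of this characterization.

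For the direction ``if'', assume $\pi_\ast = \pi_\star$. Then, for every $\dmd \in \M$, the condition $\pi_\dmd \supset \pi_\ast$ is literally the same as $\pi_\dmd \supset \pi_\star$. Applying the theorem to both $\ast$ and $\star$ shows that the two principal right ideals are described by identical membership conditions, so $\ast\M = \star\M$, that is, $\ast \R \star$.

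For the direction ``only if'', assume $\ast\M = \star\M$. Since $\M$ is a monoid with identity $\circ_{lz}$ (Proposition~\ref{prop-1}), we have $\star = \star \trg \circ_{lz} \in \star\M = \ast\M$. The theorem then gives $\pi_\star \supset \pi_\ast$. Symmetrically, $\ast \in \ast\M = \star\M$ gives $\pi_\ast \supset \pi_\star$. Combining the two inclusions yields $\pi_\ast = \pi_\star$.

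There is no real obstacle here. The only point requiring care is to invoke membership of each generator in its own principal right ideal, which holds because $\M$ has an identity; without it one could not conclude the reverse inclusions.
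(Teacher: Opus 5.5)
Your proof is correct and matches the paper's approach: the paper states this corollary without a separate proof, as an immediate consequence of the characterization $\ast\M = \{\dmd \in \M : \pi_\dmd \supset \pi_\ast\}$, and your two-inclusion argument is exactly that deduction. Your use of $\star = \star \trg \circ_{lz} \in \star\M$ works; alternatively, $\star \in \star\M$ follows directly from that characterization, since $\pi_\star \supset \pi_\star$.
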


% In~\cite[Theorem~???]{Rafieipour2023}, the right Green's class of $\ast \in \M$ was shown to always coincide with the set of its right associates. Recall that \textit{a right associate} of an element $\ast \in M$ is an element $\star \in \M$ , such that there exists a unit $\dmd \in \M$ with $\ast = \star \trg \dmd$. Using this and Corollary~\ref{R-equiv-criterion}, we obtain the following.

% \begin{corollary}
%     A binary operation $\star \in \M$ is a right associate of $\ast \in \M$ if and only if $\pi_\ast = \pi_\star$.
% \end{corollary}

We follow the already established route and provide a full enumeration of all right Green's classes and their cardinalities.

\begin{theorem}
    There are exactly
    \[
        \sum_{c = 1}^n\sum_{k = 0}^N \binom{N}{k}S(n+k, c) \sum_{m = 1}^{N-k} S(N-k, m)2^{N-k-m}
    \] different principal right ideals in $\M$, where $N = \binom{n}{2}$, and $S(n, k)$ denotes the Stirling's number of the second kind.
\end{theorem}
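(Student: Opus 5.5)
By Corollary~\ref{R-equiv-criterion}, principal right ideals of $\M$ correspond bijectively to the relations $\pi_\ast$. By Theorem~\ref{pi_ast_criterion}, these are exactly the equivalence relations $R$ on $X\times X$ that contain all their mirror classes and have at most $n$ commutative classes. So I will count such relations $R$ directly. I will do this by a bijection with combinatorial data built from the unordered pairs $\{a,b\}$, $a\neq b$; there are $N$ of them.

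\textbf{Step 1 (splitting the pairs).} First I observe how the pairs are distributed. Every diagonal pair $(a,a)$ lies in a commutative class, since $(a,a)^\leftarrow=(a,a)$. For $a\neq b$, Proposition~\ref{comm-acomm-class-crit} gives a dichotomy: either $(a,b)$ and $(b,a)$ share a commutative class, or they lie in mutually mirror anticommutative classes. Hence $R$ determines a set $P$ of ``commutative'' unordered pairs. Let $k=|P|$; there are $\binom{N}{k}$ choices of $P$.

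\textbf{Step 2 (commutative classes).} The union of the commutative classes is $\Delta(X)\cup\{(a,b),(b,a):\{a,b\}\in P\}$. Each unordered pair in $P$ behaves as a single indivisible unit, so the commutative classes are exactly a partition of the $n+k$ units ($n$ diagonal points and $k$ pairs) into $c$ blocks. Such partitions are counted by $S(n+k,c)$. The bound in Theorem~\ref{pi_ast_criterion} forces $c\le n$, and $c\ge1$ because the diagonal is nonempty.

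\textbf{Step 3 (anticommutative classes).} The anticommutative classes come in mirror pairs $\{C,C^\leftarrow\}$. Each such pair covers a set of unordered pairs from the remaining $N-k$ ones, so these sets form a partition of the $N-k$ remaining unordered pairs into some $m$ blocks; there are $S(N-k,m)$ such partitions. Within a block of size $s$, the data $\{C,C^\leftarrow\}$ amounts to choosing, for each unordered pair in the block, which of its two orientations goes into $C$, modulo swapping $C$ with $C^\leftarrow$. This gives $2^{s}/2=2^{s-1}$ choices per block, so $2^{N-k-m}$ choices in total for the partition. Conversely, any such data defines an equivalence relation that contains its mirror classes and has $c\le n$ commutative classes. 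By Theorem~\ref{pi_ast_criterion} it is therefore some $\pi_\ast$, so the correspondence is a bijection. Summing over $c$, $k$ and $m$ yields the formula.

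\textbf{Main obstacle.} The step needing the most care is Step 3: checking that the factor $2^{N-k-m}$ counts each mirror pair exactly once, i.e.\ correctly dividing out the $C\leftrightarrow C^\leftarrow$ symmetry. A second point is the boundary term $k=N$. There the inner sum over $m$ should be read with the conventions $S(0,0)=1$ and $m=0$, contributing $1$, since all off-diagonal pairs commute. As literally written, with $m$ running from $1$, this term would vanish. I would verify it against small cases, e.g.\ $n=2$, where $N=1$ and direct enumeration is easy.
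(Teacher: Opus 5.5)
Your argument is the same as the paper's: fix the $k$ commuting unordered pairs ($\binom{N}{k}$ ways), partition the $n+k$ resulting units into $c\le n$ commutative classes ($S(n+k,c)$ ways), then partition the remaining $N-k$ pairs into $m$ blocks, each giving a mirror pair of anticommutative classes, with $2^{l-1}$ orientation choices per block of size $l$. Your boundary remark is correct, and the paper's proof has the same slip since it also sums from $m=1$: for $n=2$ the formula as written gives $2$, but there are $6$ such relations (this matches the right-class sizes $4+4+2+2+2+2=16=|\M|$), so the inner sum must start at $m=0$ with $S(0,0)=1$, which changes only the $k=N$ term.
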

\begin{proof}
    From the proof of Theorem~\ref{pi_ast_criterion}, we know that an arbitrary partitioning of $X \times X$ that contains all its mirror classes and has no more than $n$ commutative classes defines a unique $\R$--class. We approach this by counting only the partitions of $\Delta(X) \cup \omega$, and deciding what classes are going to be anticommutative.
    
    Let $1 \leq c \leq n$ be the number of commutative classes in the partitioning. These commutative classes consist of $n$ elements from $\Delta(X)$ together with some $k$ elements from $\omega$ with their mirrored versions, where $0 \leq k \leq N$. This $k$ elements can be selected in $\binom{N}{k}$ ways, and every such selection together with $\Delta(X)$ can be partitioned into $c$ classes in $S(n + k, c)$ ways. Henceforth, there are
    \[
        \sum_{c = 1}^n\sum_{k=0}^N\binom{N}{k}S(n+k, c)
    \]
    possible commutative classes.

   Next, we determine the anticommutative classes. There are $N - k$ remaining elements in $\omega$ after we fix the commutative classes. Denote those elements by $\omega'$. Our goal is to partition $\omega' \cup \omega'^\leftarrow$ such that the partitioning contains all mirror classes, and such that every class is anticommutative. This is equivalent to partitioning $\omega'$ into some classes, and then reversing the order of some elements we choose. For example, if $\omega' = \{(1, 2), (3, 4)\}$, we can partition $\omega'$ into $1$ class, and then reverse the order of $(1, 2)$ to obtain a relation with anticommutative classes $\{(2, 1), (3, 4)\}$ and $\{(1, 2), (4, 3)\}$.

    In general, let $\mathcal{P}$ be some partitioning of $\omega'$, and let $C \in \mathcal{P}$ be a class containing $l$ elements. Reversing the order of every element of a subset $A \subset C$ produces a new relation with classes $(C \setminus A) \cup A^\leftarrow$ and $(C^\leftarrow \setminus A^\leftarrow) \cup A$ instead of $C$ and $C^\leftarrow$. Notice that reversing the order of $C \setminus A$ produces the same relation. Therefore, for every choice of $A$, there is another choice producing the same relation, hence the number of possible relations obtainable by reversing the order of a subset $A \subset C$ is decreased by a factor of $2$, thus it is $\frac{2^l}{2} = 2^{l - 1}$, because there are $2^l$ subsets of $C$. \\

    Let now $\mathcal{P}$ be a partitioning of $\omega'$ into $m$ classes, each of cardinality $l_1, l_2, \dots, l_m$ respectively. For each class $C_i$, there are $2^{l_i - 1}$ relations obtainable by modifying it, henceforth there are 
    \[
    \prod_{i = 1}^m 2^{l_i-1} = 2^{\sum_{i = 1}^m(l_i - 1)} = 2^{(\sum_{i=1}^ml_i) - m} = 2^{|\omega'| - m} = 2^{N - k - m}
    \] impactful modifications of $\mathcal{P}$ in total. 

    From applying this logic to every possible partitioning of $\omega'$ into $m$ classes for every possible $m$, it follows that the number of partitions of $\omega' \cup \omega'^\leftarrow$ into mirror-preserved anticommutative classes is exactly 
    \[
    \sum_{m = 1}^{N-k} S(N-k, m)2^{N - k -m}.
    \]

    The claim follows by applying the product rule.

\end{proof}

\begin{proposition}
    Let $\ast \in \M$ be a binary operation with $crank(T_\ast) = c$ and $arank(T_\ast) = a$. Then the cardinality of its right Green's class is precisely
    \[
        2^a \frac{n!}{(n-c)!}\frac{N!}{(N-a)!}.
    \]
\end{proposition}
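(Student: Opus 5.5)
By Corollary~\ref{R-equiv-criterion}, the $\R$--class of $\ast$ consists exactly of those $\star \in \M$ with $\pi_\star = \pi_\ast$. So the plan is to fix the partition $(X\times X)/\pi_\ast$ and count all pairmorph transformations $T_\star$ whose fibres are exactly these classes. Such a $T_\star$ is the same thing as an injective assignment of values to classes that is compatible with $^\leftarrow$ (Lemma~\ref{T_ast_criterion}).

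First I will determine the class structure of $\pi_\ast$ in terms of $c$ and $a$. By Proposition~\ref{comm-acomm-class-crit}, the commutative classes are exactly the nonempty fibres $T_\ast^{-1}(y,y)$, so there are $crank(T_\ast)=c$ of them. By Proposition~\ref{pi_ast_contains_all_mirror_classes}, the anticommutative classes come in mirror pairs $\{C, C^\leftarrow\}$. Each such pair corresponds to a single element of $acim(T_\ast)$: the unordered off-diagonal pair $\{T_\ast(C), T_\ast(C^\leftarrow)\}$, represented in $\omega$. Hence there are exactly $a$ mirror pairs. Both observations should follow directly from the definitions of $accok_\ast$ and $acim$.

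Next I will count the compatible $T_\star$ with $\pi_\star=\pi_\ast$. Since $\pi_\star = \pi_\ast$, the map $T_\star$ is constant on each class and takes distinct values on distinct classes.
\begin{itemize}
\item A commutative class $Y$ satisfies $Y = Y^\leftarrow$, so its value $v$ must satisfy $v = v^\leftarrow$, i.e.\ $v \in \Delta(X)$. Conversely, by Proposition~\ref{comm-acomm-class-crit} a class with a diagonal value is commutative. The $c$ commutative classes must therefore receive distinct diagonal values, which can be done in $n!/(n-c)!$ ways.
\item For a mirror pair $\{C, C^\leftarrow\}$, the value of $C^\leftarrow$ is forced to be the reverse of the value of $C$, and that value must be off-diagonal. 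Choosing it amounts to choosing an element of $\omega$ together with an orientation bit. Injectivity across different mirror pairs is exactly injectivity of the induced map from the $a$ pairs into $\omega$. This gives $\frac{N!}{(N-a)!}$ ways for the underlying map and $2^a$ for the orientations.
\end{itemize}
I will then check that every such assignment defines a transformation commuting with $^\leftarrow$. By Lemma~\ref{T_ast_criterion} it is therefore some $T_\star$, and by injectivity its fibres are exactly the given classes, so $\pi_\star=\pi_\ast$. Distinct assignments give distinct operations. Multiplying the counts yields $2^a \frac{n!}{(n-c)!}\frac{N!}{(N-a)!}$.

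The main obstacle is the orientation bookkeeping: avoiding a double count, since swapping which class of the pair is called $C$ while also reversing the value yields the same $T_\star$. I will fix a choice of representative class in each mirror pair (as with the set $\mathcal{A}$ in the proof of Theorem~\ref{pi_ast_criterion}) before counting. With that choice fixed, the $2^a$ orientation choices and the injection into $\omega$ are independent and each $T_\star$ is counted exactly once.
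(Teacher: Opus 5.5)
Your proposal is correct and follows essentially the same route as the paper. Both fix the partition $(X\times X)/\pi_\ast$, send the $c$ commutative classes injectively into $\Delta(X)$, send one class from each of the $a$ mirror pairs injectively into $\omega$, and gain a factor $2^a$ from orientations. The paper gets the $2^a$ by varying the choice of $\mathcal{A}$ (keeping each class or swapping it for its mirror), whereas you fix $\mathcal{A}$ and attach an orientation bit, which is equivalent bookkeeping. Your check that there are exactly $c$ commutative classes and $a$ mirror pairs fills in a step the paper only asserts.
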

\begin{proof}
    Let $(X \times X)/\pi_\ast = \mathcal{Y \cup A \cup B}$ be the partitioning from the proof of Theorem~\ref{pi_ast_criterion}. From that proof we know that every binary operation $\star$ with $\pi_\star = \pi_\ast$ corresponds to some specific choice of $\mathcal{A}$ and a pair of injective mappings from $\mathcal{Y}$ to $\Delta(X)$ and from $\mathcal{A}$ to some $\omega$. From our conditions we also know that $|\mathcal{Y}| = c$ and $|\mathcal{A}| = a$.

    There are $\frac{n!}{(n-c)!}$ possible injections of $\mathcal{Y}$ to $\Delta(X)$, and there are $\frac{N!}{(N-a)!}$ injections of $\mathcal{A}$ to $\omega$. Now to account for all possible choices of $\mathcal{A}$, we multiply the total count by $2$ for every class in $\mathcal{A}$, because we can either keep the class as it is, or take its mirror class instead of it. The claim follows by applying the product rule.
\end{proof}

\begin{example}
    Let $X, \omega$ and $\ast$ be the same as in Example~\ref{decomp_example}. Then the relation $\pi_\ast$ contains $3$ commutative classes:
    \[
        C_0 = \{(0, 1), (1, 0)\}; \; C_1 = \{(1, 1), (2, 2)\}; \; C_2 = \{(0, 0)\}.
    \]
    Each of these classes must be mapped to some element of $\Delta(X)$, because otherwise the classes will not be commutative. The map must be injective because elements from different classes cannot have identical values of the pairmorph transformation. Hence, there are $3! = 6$ such mappings.

    Next, $\pi_\ast$ also contains an anticommutative class $C = \{(0, 2), (2, 1)\}$ and its mirror class. The value for $C$ can be chosen in $|\omega| = 3$ ways, and we can also choose to assign this value to $C^\leftarrow$ instead, hence there are 6 possible mappings. This gives us $6 \cdot 6 = 36$ options in total.
\end{example}

\begin{example}
    Let $X$ be of arbitrary finite cardinality $n$. Similarly to Example~\ref{card_L_class_circ_lz}, the right Green's class of $\circ_{lz}$ corresponds to the set of units $U(\M)$. Applying our formula with $c=n$ and $a = N$, again produces $|U(\M)| = n!N!2^N$, which as already mentioned, was established to be correct in \cite[Theorem~3.6]{Lopez2022}.
\end{example}

\section{Previous results on ideals within our framework}
With principal left and right ideals being fully described, we will go through some previously discovered ideals of $\M$, and show that our framework indeed accurately represents their behavior.

\begin{proposition}[\protect{\cite[Theorem~2.2]{Lopez2022}}]
    The principal right ideal of a constant binary operation is the set of all constant binary operations $\mathcal{C}$. The principal left ideal of an operation $\ast \in \M$ is a singleton if and only $\ast$ is constant. 
\end{proposition}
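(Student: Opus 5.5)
Both claims follow from the two characterizations of principal ideals proved above: Theorem~\ref{L_princ_ideals} for left ideals and the preceding theorem, $\ast\M=\{\dmd:\pi_\dmd\supset\pi_\ast\}$, for right ideals.

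First we translate constancy into pairmorph language. A binary operation $\ast$ is constant with value $x$ exactly when $T_\ast$ maps every pair $(a,b)$ to $(x,x)$. Equivalently, $im(T_\ast)=\{(x,x)\}$, which is the same as saying that $\pi_\ast$ is the full relation $(X\times X)\times(X\times X)$, i.e.\ has a single class.

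For the right ideal, let $\ast$ be constant.
\begin{itemize}
\item Since $\pi_\ast$ is the full relation, the condition $\pi_\dmd\supset\pi_\ast$ forces $\pi_\dmd$ to be full.
\item Then $T_\dmd$ is constant, say with value $(x,y)$. Applying this at a diagonal pair $(a,a)$ and using Lemma~\ref{T_ast_criterion} gives $(x,y)=(x,y)^\leftarrow$. Hence $x=y$ and $\dmd$ is constant.
\item Conversely, every constant $\dmd$ has full $\pi_\dmd$, so $\dmd\in\ast\M$.
\end{itemize}
Thus $\ast\M=\mathcal{C}$. Alternatively, one can compose directly: $T_{\ast\trg\circ}=T_\circ\circ T_\ast$ is constant with value $T_\circ(x,x)$, and choosing $\circ$ suitably hits every diagonal pair.

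For the left ideal, suppose first that $\ast$ is constant with value $x$. Then $im(T_\ast)=diag(T_\ast)=\{(x,x)\}$. By Theorem~\ref{L_princ_ideals}, any $\dmd\in\M\ast$ has $im(T_\dmd)\subset\{(x,x)\}$, so $T_\dmd=T_\ast$ and $\M\ast=\{\ast\}$.

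Conversely, suppose $\ast$ is not constant; we exhibit a second element of $\M\ast$. Choose $(x,x)\in diag(T_\ast)$, which is nonempty, and let $\dmd$ be the constant operation with value $x$. Then $im(T_\dmd)=diag(T_\dmd)=\{(x,x)\}$, which lies in both $im(T_\ast)$ and $diag(T_\ast)$. By Theorem~\ref{L_princ_ideals}, $\dmd\in\M\ast$. Since $\ast$ is not constant, $\dmd\neq\ast$, so $\M\ast$ is not a singleton.

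No step here is a real obstacle. The only point needing care is the observation that a constant pairmorph transformation must take a diagonal value. This is exactly the fact that rules out non-constant operations with full $\pi$, and it comes from Lemma~\ref{T_ast_criterion} applied at a diagonal pair.
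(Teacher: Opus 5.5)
Your proof is correct and takes the same route as the paper, which also obtains both claims from Theorem~\ref{L_princ_ideals} and the $\pi$-characterization of principal right ideals, using that a constant operation has $im(T_\ast)=diag(T_\ast)=\{(d,d)\}$ and a single $\pi_\ast$-class. Your version is more complete: it spells out the converse of the left-ideal claim (for non-constant $\ast$, the constant operation with value in $diag(T_\ast)$ is a second element of the left ideal) and checks that a constant pairmorph transformation must take a diagonal value, two points the paper's one-line proof leaves implicit.
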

\begin{proof}
    If $c_d \in \mathcal{C}$ is a constant binary operation, then $im(T_{c_d}) = diag(T_{c_d}) = \{(d, d)\}$, and $\pi_{c_d} = \{X \times X\}$. There is no way to enlarge $\pi_{c_d}$ or to build a proper subset of $im(T_{c_d})$. The claim follows.
\end{proof}

\begin{proposition}[\protect{\cite[Proposition~9]{BinarySystems2008}}]
    For any $a \in X$, the set $Z_a=\{\circ \in \M\mid \A \; x \in X: x\circ x = a\}$ of all ``unique square'' operations is a left ideal in $\M$.
\end{proposition}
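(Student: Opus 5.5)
We show that $Z_a$ is closed under left multiplication by arbitrary elements of $\M$, using the description of principal left ideals from Theorem~\ref{L_princ_ideals}. The first step is to translate membership in $Z_a$ into the pairmorph language. An operation $\circ$ lies in $Z_a$ exactly when $x \circ x = a$ for every $x \in X$, that is, when $T_\circ(x,x) = (a,a)$ for all $x$. Since $\Delta(X)$ is nonempty, this is equivalent to
\[
diag(T_\circ) = \{(a,a)\}.
\]

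Next, take $\circ \in Z_a$ and an arbitrary $\star \in \M$. We must show that $\star \trg \circ \in Z_a$. Every element of the principal left ideal $\M\circ$ has this form, so Theorem~\ref{L_princ_ideals} gives
\[
diag(T_{\star \trg \circ}) \subset diag(T_\circ) = \{(a,a)\}.
\]
This diagonal image is the image of the nonempty set $\Delta(X)$, so it is nonempty, and therefore it equals $\{(a,a)\}$. Hence $\star \trg \circ \in Z_a$.

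One can also check this directly without invoking the theorem. By the homomorphism property from Theorem~\ref{M_iso_to_T_XxX},
\[
T_{\star\trg\circ}(x,x) = T_\circ(T_\star(x,x)).
\]
Here $T_\star(x,x) = (x\star x, x\star x)$ is a diagonal pair, and $T_\circ$ sends every diagonal pair to $(a,a)$.

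Finally, $Z_a$ is nonempty, since it contains the constant operation $c_a$. So $Z_a$ is a left ideal. There is no real obstacle here. The only point that needs care is to apply the diagonal condition of Theorem~\ref{L_princ_ideals}, which is the relevant invariant here, rather than the full pairmorph image.
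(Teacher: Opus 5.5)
Your proof is correct and follows the paper's route. Both identify $Z_a$ with the operations whose diagonal image is $\{(a,a)\}$, and both use the diagonal-image inclusion from Theorem~\ref{L_princ_ideals} (a nonempty subset of a singleton is that singleton) to get $\M\circ \subset Z_a$. Your extra direct computation $T_{\star\trg\circ}(x,x)=T_\circ(T_\star(x,x))=(a,a)$ is a valid self-contained check of the same fact.
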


\begin{proof}
    For any $\ast \in Z_a$, we trivially have $diag(T_\ast) = \{(a, a)\}$, hence there is no way to reduce it. Therefore, all elements in its principal left ideal will also be in $Z_a$.
\end{proof}

\begin{proposition}[\protect{\cite[Proposition~13]{BinarySystems2008}}]
    The set $Ab(X)$ of all commutative operations is a two-sided ideal in $\M$.
\end{proposition}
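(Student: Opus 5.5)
We want to show that $Ab(X)$ is a two-sided ideal. We translate commutativity into the language of pairmorph transformations: $\ast$ is commutative if and only if $T_\ast(a,b) = T_\ast(b,a)$ for all $a,b \in X$. Since $T_\ast$ commutes with $^\leftarrow$, we have $T_\ast(b,a) = (T_\ast(a,b))^\leftarrow$, so this is equivalent to $T_\ast(a,b) = (T_\ast(a,b))^\leftarrow$. In other words, $\ast \in Ab(X)$ if and only if $im(T_\ast) \subset \Delta(X)$. Equivalently, by Proposition~\ref{comm-acomm-class-crit}, every $\pi_\ast$--class is commutative, i.e.\ $(a,b)\,\pi_\ast\,(b,a)$ for all $a,b$. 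The plan is to use the first form for one side and the second form for the other, invoking Theorem~\ref{L_princ_ideals} and the characterization of principal right ideals.

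\emph{Left ideal.} Let $\ast \in Ab(X)$ and $\circ \in \M$. By Theorem~\ref{L_princ_ideals}, $\dmd = \circ \trg \ast$ satisfies $im(T_\dmd) \subset im(T_\ast) \subset \Delta(X)$. By the criterion above, $\dmd$ is commutative. (Directly, this is just $T_{\circ\trg\ast} = T_\ast \circ T_\circ$, whose image lies in $im(T_\ast)$.)

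\emph{Right ideal.} Let $\ast \in Ab(X)$ and $\circ \in \M$. By the theorem on principal right ideals, $\dmd = \ast \trg \circ$ satisfies $\pi_\dmd \supset \pi_\ast$. Since $(a,b)\,\pi_\ast\,(b,a)$ for every pair, we also get $(a,b)\,\pi_\dmd\,(b,a)$. Hence every $\pi_\dmd$--class is commutative, and $\dmd$ commutes everywhere by Proposition~\ref{comm-acomm-class-crit}. Directly: $T_\dmd(b,a) = T_\circ(T_\ast(b,a)) = T_\circ(T_\ast(a,b)) = T_\dmd(a,b)$.

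Since a two-sided ideal is exactly a subset closed under multiplication on both sides by arbitrary elements, this finishes the argument. Nonemptiness is immediate, since constant operations are commutative. There is no real obstacle here. The only point requiring care is the initial equivalence between commutativity and $im(T_\ast) \subset \Delta(X)$, which relies on the $^\leftarrow$-equivariance from Lemma~\ref{T_ast_criterion}.
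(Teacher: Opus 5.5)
Your proof is correct and follows the paper's argument: left closure from $im(T_{\circ\trg\ast}) \subset im(T_\ast) \subset \Delta(X)$ via Theorem~\ref{L_princ_ideals}, and right closure from $\pi_{\ast\trg\circ} \supset \pi_\ast$ together with every $\pi_\ast$--class being commutative. Your one-line direct computations are a nice addition, since they make the argument self-contained without the ideal characterizations.
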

\begin{proof}
    An operation $\ast \in \M$ belongs to $Ab(X)$ if and only if $im(T_\ast) \subset \Delta(X)$. Therefore, every element $\circ$ in the principal left ideal of $\ast$ will have $im(T_\circ) \subset im(T_\ast) \subset \Delta(X)$, so it will also belong to $Ab(X)$. Moreover, all $\pi_\ast$ classes of $\ast \in Ab(X)$ must be commutative, so all $\pi_\circ$ classes of any $\circ$ in the principal right ideal of $\ast$ will be commutative, since $\pi_\circ$ is an enlargement of $\pi_\ast$. Hence, $\circ$ must also belong to $Ab(X)$.
\end{proof}

\begin{proposition}[\protect{\cite[Proposition~14]{BinarySystems2008}}]
    The set $\overline{Str(X)}$ of all not anticommutative binary operations is a right ideal.
\end{proposition}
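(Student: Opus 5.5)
The plan is to use the description of principal right ideals from the previous section, namely $\ast\M = \{\dmd \in \M : \pi_\dmd \supset \pi_\ast\}$, and to phrase membership in $\overline{Str(X)}$ as a property of the relation $\pi_\ast$. I read ``anticommutative'' in the sense of~\cite{BinarySystems2008}: $\ast$ is anticommutative if $a \ast b \neq b \ast a$ for all distinct $a, b \in X$. So $\ast \in \overline{Str(X)}$ means there are distinct $a \neq b$ with $a \ast b = b \ast a$.

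First I translate this into the language of $\pi_\ast$. By Proposition~\ref{comm-acomm-class-crit}, $\ast$ commutes at $(a,b)$ exactly when $(a,b)\,\pi_\ast\,(b,a)$, that is, when the $\pi_\ast$--class of $(a,b)$ is commutative. Hence $\ast \in \overline{Str(X)}$ if and only if $\pi_\ast$ has a commutative class containing an off-diagonal pair. Equivalently, $\pi_\ast$ contains a pair $((a,b),(b,a))$ with $a \neq b$. The diagonal pairs give no information here, since they are trivially $\pi_\ast$--related to their own mirrors.

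Next, I take $\ast \in \overline{Str(X)}$ and an arbitrary $\circ \in \M$, and set $\dmd = \ast \trg \circ$. Then $\dmd \in \ast\M$, so by the theorem on principal right ideals $\pi_\ast \subset \pi_\dmd$. The witnessing pair $((a,b),(b,a))$ with $a \neq b$ therefore also lies in $\pi_\dmd$. This gives $a \dmd b = b \dmd a$, so $\dmd \in \overline{Str(X)}$. One can also check this directly from Theorem~\ref{M_iso_to_T_XxX}:
\[
T_\dmd(a,b) = T_\circ(T_\ast(a,b)) = T_\circ(T_\ast(b,a)) = T_\dmd(b,a).
\]
It also remains to note that $\overline{Str(X)}$ is nonempty when $n \ge 2$, for instance because it contains the constant operations.

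There is no real obstacle in this argument. The only delicate point is the definition: I must make sure that ``anticommutative'' quantifies only over distinct $a \neq b$. Otherwise no operation would be anticommutative and the statement would become vacuous. It is also worth remarking why the set is not a left ideal. Left multiplication can compose the pairmorph transformation with a map that avoids the commuting pair, so the witness need not survive.
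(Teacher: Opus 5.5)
Your proof is correct and follows essentially the paper's argument: you characterize $\overline{Str(X)}$ as the operations whose $\pi_\ast$ has a commutative class containing an off-diagonal pair, and then use $\pi_\ast \subset \pi_\dmd$ for every $\dmd \in \ast\M$ (your direct check $T_{\ast\trg\circ}(a,b) = T_\circ(T_\ast(a,b)) = T_\circ(T_\ast(b,a)) = T_{\ast\trg\circ}(b,a)$ is a self-contained shortcut that avoids citing the right-ideal theorem). One caution about your closing aside, which is not part of the proof: for $n = 2$ the set $\overline{Str(X)}$ coincides with $Ab(X)$ and is a two-sided ideal, so the claim that it fails to be a left ideal holds only for $n \ge 3$.
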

\begin{proof}
    An arbitrary $\ast \in \M$ is in $\overline{Str(X)}$ if and only if $\pi_\ast$ has a commutative class containing a pair not in $\Delta(X)$, hence every $\circ$ in the principal right ideal of $\ast$ has to also contain a commutative class with an element outside of $\Delta(X)$, because $\pi_\circ \supset \pi_\ast$. The claim follows.
\end{proof}

\section{Idempotents and regular elements}
Recall that an element $\ast \in \M$ is said to be \textit{idempotent} if $\ast^2 = \ast \trg \ast = \ast$.
Idempotent elements play a crucial role in understanding the structure of any semigroup. They are closely related to regular elements, maximal subgroups, and other fundamental structural components of semigroup theory.\\

In \cite{Lopez2022}, a small family of idempotent elements of $\M$ was described and some important observations were made. The set of all idempotent elements of $\M$ was concluded to be not closed under $\trg$, and a way to generate new idempotents from a given one was presented. Later in \cite{Rafieipour2023}, another family of idempotents (commutative ``tournament operations'') was described, and some combinatorial estimates were updated. \\

Now that we have a new framework in place, thus the description of arbitrary idempotents transfers directly from the transformation semigroup $\mathcal{T}_{X \times X}$.

\begin{proposition}\label{idempotent-crit}
    A given operation $\ast \in \M$ is an idempotent element if and only if it acts as $\circ_{lz}$ when restricted to $im(T_\ast)$. In other words, $T_\ast(a, b) = (a, b)$ for any $(a, b) \in im(T_\ast)$. 
\end{proposition}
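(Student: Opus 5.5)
The plan is to move the question into $\T_{X \times X}$ with Theorem~\ref{M_iso_to_T_XxX} and then use the standard description of idempotent transformations. The isomorphism $\phi(\ast) = T_\ast$ satisfies $T_{\circ \trg \ast} = T_\ast \circ T_\circ$. Taking $\circ = \ast$ gives $T_{\ast \trg \ast} = T_\ast \circ T_\ast$. Since $\phi$ is injective, $\ast \trg \ast = \ast$ holds if and only if $T_\ast \circ T_\ast = T_\ast$ as maps on $X \times X$. So the statement reduces to showing that a self-map $T$ of a set is idempotent exactly when it fixes every point of its image.

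For necessity, assume $T_\ast^2 = T_\ast$ and take $(a,b) \in im(T_\ast)$, say $(a,b) = T_\ast(u,v)$. Then
\[
T_\ast(a,b) = T_\ast(T_\ast(u,v)) = T_\ast(u,v) = (a,b).
\]
For sufficiency, assume $T_\ast$ fixes $im(T_\ast)$ pointwise. For any $(u,v)$, the pair $T_\ast(u,v)$ lies in the image, so $T_\ast(T_\ast(u,v)) = T_\ast(u,v)$. Hence $T_\ast^2 = T_\ast$.

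It remains to translate ``$T_\ast$ is the identity on $im(T_\ast)$'' into the wording of the statement. Note that $T_{\circ_{lz}}(a,b) = (a \circ_{lz} b, b \circ_{lz} a) = (a,b)$, so $T_{\circ_{lz}}$ is the identity of $X \times X$. Therefore ``$\ast$ acts as $\circ_{lz}$ on $im(T_\ast)$'' means exactly $a \ast b = a$ and $b \ast a = b$ for every $(a,b) \in im(T_\ast)$, which is the same as $T_\ast(a,b) = (a,b)$.

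I do not expect a real obstacle here. The only point needing care is the order of composition in Theorem~\ref{M_iso_to_T_XxX}, which is irrelevant when squaring a single element.
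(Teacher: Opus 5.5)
Your proof is correct and takes essentially the same route as the paper. You transfer the problem to $\mathcal{T}_{X \times X}$ via the isomorphism of Theorem~\ref{M_iso_to_T_XxX}, and then use that a transformation is idempotent exactly when it fixes its image pointwise. The only difference is that the paper cites this fact from \cite[Theorem~2.7.2]{GanyushkinMazor2009}, whereas you verify it directly in a few lines.
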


\begin{proof}
    Follows directly from \cite[Theorem 2.7.2]{GanyushkinMazor2009}.
\end{proof}

\begin{example}
    Let $X = \{0, 1, 2\}$. Consider the operation $\ast \in \M$ given by the pairmorph graph depicted on Figure~\ref{fig:idemp}. Then $\ast$ is an idempotent element of $\M$, because $im(T_\ast) = \{(0, 0), (2, 2), (1, 2), (2, 1)\}\}$, and $\ast$ acts as $\circ_{lz}$ on the pairmorph image of $\ast$.
\begin{figure}[H]
    \centering
        \begin{tikzpicture}[node distance=16mm, scale=0.5,
          main/.style = {draw, circle, minimum size=0.6cm}]
            \node[main] (1) at (0,8) {$(0,0)$};
            \node[main] (2) at (0,4) {$(1,1)$};
            \node[main] (3) at (0,0) {$(2,2)$};
            \node[main] (4) at (3,7) {$(0,1)$};
            \node[main, fill=gray] (5) at (7,7) {$(0,2)$};
            \node[main] (6) at (5,2) {$(1,2)$};
        
            \draw[-{Latex[length=1.2mm,width=2.4mm]}]
              (1) edge[in = -110, out = -70, looseness=5] (1)
              (2) edge (3)
              (3) edge[in=-110, out=-70, looseness=5] (3)
              (4) edge[red, dashed] (6)
              (5) edge[red, dashed] (6)
              (6) edge[red, in=-110, out=-70, looseness=5, dashed] (6);
              
\end{tikzpicture}

    \caption{An idempotent element of $\M$ for $X = \{0, 1, 2\}$.}
    \label{fig:idemp}
\end{figure}

\end{example}

One immediately notices that all previously known families of idempotents act as left zeros operations on their pairmorph images. For example, constant binary operations $c_d$ have $im(T_{c_d}) = \{(d, d)\}$ and $T_{c_d}(d, d) = (d, d)$. On top of that, the pairmorph image of any ``tournament operation'' \cite[Definition~2.2.7]{Rafieipour2023} is a subset of $\Delta(X)$ because of commutativity, so by definition those operations always act as the left zeros operation on their pairmorph images. In fact, these two sets can be nicely generalized into a broader family, which was noticed in \cite{Rafieipour2023}.
\begin{proposition}
    An arbitrary commutative idempotent binary operation on $X$ is an idempotent element in $\M$.
\end{proposition}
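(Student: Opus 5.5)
The natural route is Proposition~\ref{idempotent-crit}: it suffices to show that a commutative idempotent operation $\ast$ (meaning $a \ast b = b \ast a$ and $x \ast x = x$ for all $a,b,x \in X$) acts as $\circ_{lz}$ on its pairmorph image, i.e.\ $T_\ast(u,v) = (u,v)$ for every $(u,v) \in im(T_\ast)$.

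The first step is to locate the image. Commutativity of $\ast$ gives $T_\ast(a,b) = (a \ast b, b \ast a) = (a\ast b, a\ast b) \in \Delta(X)$ for all $(a,b)$, so $im(T_\ast) \subset \Delta(X)$. Equivalently, in the language of Proposition~\ref{comm-acomm-class-crit}, every $\pi_\ast$--class is commutative, and the operation belongs to $Ab(X)$. Thus every element of the image has the form $(y,y)$ for some $y \in X$.

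The second step uses idempotency of the operation itself: $T_\ast(y,y) = (y \ast y, y \ast y) = (y,y)$. Hence $T_\ast$ fixes every point of $\Delta(X)$, and in particular every point of $im(T_\ast)$. By Proposition~\ref{idempotent-crit}, $\ast$ is an idempotent of $\M$.

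As a sanity check independent of the cited transformation-semigroup result, one can compute directly: $a(\ast\trg\ast)b = (a\ast b)\ast(b\ast a) = (a\ast b)\ast(a\ast b) = a \ast b$, using commutativity and then idempotency. There is no real obstacle here; the only point requiring care is to keep distinct the two meanings of ``idempotent'': the operation satisfies $x \ast x = x$ on $X$, while the conclusion is $\ast\trg\ast=\ast$ in $\M$. The diagonal-fixing observation is exactly what links the two.
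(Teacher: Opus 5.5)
Your proof is correct and follows essentially the same argument as the paper: commutativity places $im(T_\ast)$ inside $\Delta(X)$, idempotency of the operation makes $T_\ast$ fix every diagonal pair, and Proposition~\ref{idempotent-crit} then gives the conclusion. Your direct computation $(a\ast b)\ast(b\ast a) = (a\ast b)\ast(a\ast b) = a\ast b$ is a self-contained extra that does not rely on the cited transformation-semigroup result at all.
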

\begin{proof}
    Let $\ast \in \M$ be commutative and idempotent. Because of commutativity, $im(T_\ast) \subset \Delta(X)$, and by definition of being an idempotent operation, for any $x \in X$, we have $x\ast x = x$, hence $T_\ast(x, x) = (x, x)$. Therefore, $\ast$ is an idempotent element of $\M$.
\end{proof}

\begin{example}
    If $X$ is an arbitrary lattice, then its infimum and supremum operations $\wedge$ and $\vee$ are idempotent elements of $\M(X)$.
\end{example}

\begin{proposition}\label{idempotent-image}
    Any idempotent element $\dmd \in \M$ has $cim(T_\dmd) = diag(T_\dmd)$.
\end{proposition}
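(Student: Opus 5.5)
We prove the two inclusions between $cim(T_\dmd) = im(T_\dmd)\cap\Delta(X)$ and $diag(T_\dmd) = T_\dmd(\Delta(X))$, using only Proposition~\ref{idempotent-crit}, which says that $T_\dmd$ fixes every point of $im(T_\dmd)$.

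\emph{Step 1: $diag(T_\dmd) \subset cim(T_\dmd)$.} This holds for every $\dmd\in\M$, not only idempotents. Since $T_\dmd(x,x) = (x\dmd x, x\dmd x)$, each element of $diag(T_\dmd)$ lies in $\Delta(X)$. It also lies in $im(T_\dmd)$, because $\Delta(X)\subset X\times X$. Hence it lies in $im(T_\dmd)\cap\Delta(X) = cim(T_\dmd)$.

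\emph{Step 2: $cim(T_\dmd)\subset diag(T_\dmd)$.} This is where idempotency is needed. Let $(y,y)\in cim(T_\dmd)$. Then $(y,y)\in im(T_\dmd)$, so by Proposition~\ref{idempotent-crit} we have $T_\dmd(y,y) = (y,y)$. But $T_\dmd(y,y)$ is by definition an element of $T_\dmd(\Delta(X)) = diag(T_\dmd)$, so $(y,y)\in diag(T_\dmd)$.

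Combining the two steps gives the equality. There is no real obstacle; the only point to notice is that idempotency lets us realize an image point $(y,y)$ as the image of the diagonal pair $(y,y)$ itself, rather than only of some off-diagonal pair. As a remark, for idempotents this also gives $cdef(T_\dmd)=0$, which will be convenient when applying Theorem~\ref{L_class_cardinality} to count idempotents.
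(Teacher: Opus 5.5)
Your proof is correct and essentially identical to the paper's. The inclusion $diag(T_\dmd)\subset cim(T_\dmd)$ holds for every operation. The reverse inclusion follows because Proposition~\ref{idempotent-crit} gives $T_\dmd(y,y)=(y,y)$ for every $(y,y)\in cim(T_\dmd)$, which places $(y,y)$ in $diag(T_\dmd)$.
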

\begin{proof}
    The inclusion from right to left is always true. For every $(y, y) \in cim(T_\dmd)$ we have $T_\dmd(y, y) = (y, y)$ because $\dmd$ is an idempotent element of $\M$. Then $cim(T_\dmd) \subset diag(T_\dmd)$, completing the proof.
\end{proof}

With a complete understanding of idempotency in $\M$, we are capable of fully enumerating all idempotents.

\begin{theorem}\label{idempcount}
	The number of idempotent elements in $\M$ is exactly
	\[
		\sum_{d=1}^n\binom{n}{d}d^{n-d}\sum_{a=0}^{N}\binom{N}{a}(2a+d)^{N-a}.
	\]
\end{theorem}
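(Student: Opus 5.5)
By Proposition~\ref{idempotent-crit}, $\dmd$ is idempotent exactly when $T_\dmd$ fixes every point of $im(T_\dmd)$. I would therefore count pairmorph transformations $T$ (via Lemma~\ref{T_ast_criterion} and the decomposition of Proposition~\ref{phi_psi_decomp}) by first fixing the set $F=im(T)$ of fixed points, and then counting the ways to map everything else into $F$. Two restrictions cut down the admissible $F$. Since $T$ commutes with $^\leftarrow$, $F$ is closed under $^\leftarrow$. By Proposition~\ref{idempotent-image}, $F\cap\Delta(X)=diag(T)$. Hence $F=D\cup A\cup A^\leftarrow$, where $D\subset\Delta(X)$ with $|D|=d\ge 1$ and $A\subset\omega$ with $|A|=a$, $0\le a\le N$. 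Indeed, $F\setminus\Delta(X)$ is a union of mirror pairs $\{(x,y),(y,x)\}$, and each such pair meets $\omega$ in exactly one element.

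I would then show that, for fixed $D$ and $A$, the idempotents with $im(T)=D\cup A\cup A^\leftarrow$ are in bijection with the following choices.
\begin{enumerate}
\item A kernel $ker:\Delta(X)\to D$ that is the identity on $D$. The $n-d$ points of $\Delta(X)\setminus D$ go anywhere in $D$, giving $d^{n-d}$ options. The kernel must land in $D$ because $diag(T)=D$ is forced: diagonal points lying in $F$ are fixed and hence in $diag(T)$, while $diag(T)\subset cim(T)=D$.
\item A cokernel on $\omega$ that is the identity on $A$, while each of the $N-a$ elements of $\omega\setminus A$ is sent to an arbitrary point of $F$. This gives $(2a+d)^{N-a}$ options, the value on the mirror element being then forced by Lemma~\ref{T_ast_criterion}.
\end{enumerate}
Multiplying by the $\binom{n}{d}$ choices of $D$ and $\binom{N}{a}$ choices of $A$, and summing over $d$ and $a$, gives the stated formula.

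The step needing care is checking that every such choice really yields an idempotent with image \emph{exactly} $D\cup A\cup A^\leftarrow$, so that the map is a bijection and no idempotent is counted twice across different $(D,A)$. For one direction, the image contains $D\cup A\cup A^\leftarrow$: these points are fixed, with $A^\leftarrow$ fixed by the $^\leftarrow$-symmetry. For the other direction, the image is contained in it, since all values lie in $F$ by construction, and mirrors of values in $A\cup A^\leftarrow$ stay in $A\cup A^\leftarrow$. Points of $F$ are fixed, so Proposition~\ref{idempotent-crit} gives idempotency. Conversely, any point $p\in\omega\setminus A$ or $p\in\Delta(X)\setminus D$ is not in $F$, so it need not be, and is not, fixed. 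Thus $(D,A)$ is recovered from $T$ as $D=im(T)\cap\Delta(X)$ and $A=im(T)\cap\omega$, which gives injectivity. Nonemptiness of $D$ holds because $ker$ has nonempty image inside $\Delta(X)$.
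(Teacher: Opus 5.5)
Your proposal is correct and follows essentially the same route as the paper: fix the diagonal image $D$ and the $\omega$-part $A$ of the image (the paper's $acim$), count the $d^{n-d}$ kernels and the $(2a+d)^{N-a}$ cokernels, then sum over all $(D,A)$. Your explicit check that each choice produces image exactly $D\cup A\cup A^\leftarrow$, so that the counts for different $(D,A)$ are disjoint and together cover every idempotent, makes rigorous a step the paper leaves implicit.
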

\begin{proof}
    Let $D \subset \Delta(X)$ and $A \subset \omega$ be some sets. Denote $|D| = d$ and $|A| = a$. We count how many idempotent elements $\dmd$ have $diag(T_\dmd) = D$ and $acim(T_\dmd) = A$. Since $\dmd$ is an idempotent element of $\M$, its values on $D$ and $A$ are already determined. Then the are $d^{n-d}$ options for the $ker_\dmd$, because any mapping of $\Delta(X) \setminus D$ to $D$ is going to produce an idempotent element. \\

    Now we need to count all options for $cok_\ast$. There are $N - a$ values left to determine. Each of them may be mapped arbitrarily into either $A$, $A^\leftarrow$ or $D$. Note that these mappings do not impact the pairmorph image, since $T_\dmd$ is defined as identity on every element from $A \cup A^\leftarrow \cup D$. Therefore, there are $(2a + d)^{N - a}$ options for $cok_\dmd$. \\

    The claim follows from applying this logic to every possible $A \subset \omega$ and nonempty $D \subset \Delta(X)$.
\end{proof}

\begin{remark}
    During the review process for the present paper, the anonymous referee has pointed out that idempotent elements of $\M$ were characterized and enumerated in a recent article \cite{Lopez2026} by S. R. L{\'o}pez-Permouth (which was published after we have submitted the paper). 
    The approach used there is more direct, and it does not utilize our transformation-focused framework. There, in \cite[Theorems~2.9, 3.1]{Lopez2026}, two characterizations of idempotent elements of $\M$ are provided, which are equivalent to Proposition~\ref{idempotent-crit}.
    As a result, in \cite[Theorem~3.6]{Lopez2026}, a full enumeration of all idempotent elements of $\M$ is given, which is equivalent to ours from Theorem~\ref{idempcount}. To demonstrate the equivalence of two enumerations, below we simplify the lengthy formula obtained in \cite[Theorem~3.6]{Lopez2026} (recall that $N=\binom{n}{2}$):
    \begin{align*}
       &\sum_{d=1}^{n} \binom{n}{d} d^{N + n - d}
        +
        \sum_{d=1}^{n}\sum_{k=1}^{N}\sum_{t=0}^{N-k}
        \binom{n}{d}
        \binom{N}{k}
        \binom{N-k}{t}
        d^{n-d+t}
        (2k)^{N-k-t} = \\
        &=\sum_{d=1}^{n} \binom{n}{d} d^{N + n - d}
        +
        \sum_{d=1}^{n}\sum_{k=1}^{N}
        \binom{n}{d}
        \binom{N}{k}
        \sum_{t=0}^{N-k}\binom{N-k}{t}
        d^{n-d+t}
        (2k)^{N-k-t} = \\
        &=\sum_{d=1}^{n} \binom{n}{d} d^{N + n - d}
        +
        \sum_{d=1}^{n}\sum_{k=1}^{N}
        \binom{n}{d}
        \binom{N}{k}
        d^{n-d}
        \sum_{t=0}^{N-k}\binom{N-k}{t}
        d^t
        (2k)^{N-k-t} = \\
        &= \sum_{d=1}^{n} \binom{n}{d} d^{N + n - d}
        +
        \sum_{d=1}^{n}\sum_{k=1}^{N}
        \binom{n}{d}
        \binom{N}{k}
        d^{n-d}
        (d+2k)^{N-k} = \\
        &=\sum_{d=1}^{n}\sum_{k=0}^{N}
        \binom{n}{d}
        \binom{N}{k}
        d^{n-d}
        (d+2k)^{N-k} = \sum_{d=1}^{n}\binom{n}{d}d^{n-d}\sum_{k=0}^{N}
        \binom{N}{k}
        (d+2k)^{N-k}.
    \end{align*}
\end{remark}

Not only does Proposition~\ref{idempotent-crit} allow us to determine the exact number of idempotent elements of $\M$, it provides us with a clean criterion for regularity. Proposition~\ref{idempotent-image} establishes a restriction on the commutative image for idempotent elements. This brings us to a known result in semigroup theory, stating that an element of a semigroup is regular if and only if its $\L$ (or $\R$) class contains an idempotent \cite[Proposition~4.7.1]{GanyushkinMazor2009}.

\begin{proposition}\label{cim=diag}
    A given $\ast \in \M$ is a regular element of $\M$ if and only if $cim(T_\ast) = diag(T_\ast)$.
\end{proposition}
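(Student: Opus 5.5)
We use the fact quoted just before the statement: an element of a semigroup is regular if and only if its $\L$--class contains an idempotent \cite[Proposition~4.7.1]{GanyushkinMazor2009}. By Corollary~\ref{L_class_char}, the $\L$--class of $\ast$ consists of all $\dmd$ with $im(T_\dmd)=im(T_\ast)$ and $diag(T_\dmd)=diag(T_\ast)$. Since $cim(T_\dmd)=im(T_\dmd)\cap\Delta(X)$ depends only on the pairmorph image, the quantities $cim$ and $diag$ are constant on $\L$--classes. This reduces the statement to two directions, which we handle separately.

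For necessity, suppose $\ast$ is regular. Then there is an idempotent $\dmd$ with $\dmd \L \ast$. Proposition~\ref{idempotent-image} gives $cim(T_\dmd)=diag(T_\dmd)$. Because both sides are $\L$--invariants, this transfers to $cim(T_\ast)=diag(T_\ast)$.

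For sufficiency, assume $cim(T_\ast)=diag(T_\ast)=D$ and put $I=im(T_\ast)$. We construct an idempotent $\dmd$ with $im(T_\dmd)=I$ and $diag(T_\dmd)=D$, defining $T_\dmd$ directly and checking the condition of Lemma~\ref{T_ast_criterion}.
\begin{itemize}
\item On $I$, set $T_\dmd$ to be the identity. This commutes with $^\leftarrow$ because $I$ is closed under $^\leftarrow$: if $T_\ast(a,b)=(x,y)$ then $T_\ast(b,a)=(y,x)$.
\item On $\Delta(X)\setminus I$, send every point to some fixed element of $D$, which is nonempty. Points of $\Delta(X)\cap I$ lie in $cim(T_\ast)=D$ and are fixed, so $diag(T_\dmd)=D$ exactly.
\item On the off-diagonal pairs outside $I$, pick a tournament $\omega$, map each $(a,b)\in\omega\setminus I$ to a fixed element of $I$ (say in $D$), and define $T_\dmd(b,a)$ as the mirror. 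This is consistent because $I$ is mirror-closed, so $(a,b)\notin I$ exactly when $(b,a)\notin I$.
\end{itemize}
Then $im(T_\dmd)=I$, and $T_\dmd$ is the identity on its image. By Proposition~\ref{idempotent-crit}, $\dmd$ is idempotent, and by Corollary~\ref{L_class_char}, $\dmd \L \ast$. Hence $\ast$ is regular.

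The only delicate point is the diagonal in the second bullet. It is exactly where the hypothesis $cim=diag$ is needed: any $(y,y)\in I$ must be fixed by $T_\dmd$, and hence must lie in $diag(T_\dmd)=D$. The remaining checks (mirror-compatibility and $I$ being closed under $^\leftarrow$) are routine.
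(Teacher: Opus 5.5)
Your proof is correct and follows essentially the same route as the paper's. For necessity you take an idempotent in the $\L$--class and apply Proposition~\ref{idempotent-image}; for sufficiency you build an idempotent that is the identity on $im(T_\ast)$ and sends everything else into $diag(T_\ast)$, then invoke Corollary~\ref{L_class_char}. Your explicit checks that $im(T_\ast)$ is mirror-closed and that $diag(T_\dmd)=D$ are more careful than the paper's, but the construction is the same.
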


\begin{proof}
    If an operation $\ast \in \M$ is regular, then its $\L$--class contains an idempotent $\dmd \in \M$ with the same pairmorph image and diagonal. From Proposition~\ref{idempotent-image} we conclude that $cim(T_\ast) = diag(T_\ast)$, because $\dmd$ is an idempotent element of $\M$. \\
    
    Let now $\ast \in \M$ be an arbitrary operation with our condition satisfied. Define an operation $\dmd \in \M$ by setting $T_\dmd(x, y) = (x, y)$ for any $(x, y) \in im(T_\ast)$, and choosing some $d \in diag(T_\ast)$, setting $T_\dmd(x, y) = (d, d)$ for everything else. By construction $\dmd$ is an idempotent element of $\M$. Note that if our condition was false, then there would be an element $(y, y) \in cim(T_\ast)$ that is not in $diag(T_\ast)$. Then we would get $diag(T_\dmd) \neq diag(T_\ast)$, because $y \dmd y = y$. However, under our assumption this is impossible, therefore pairmorph images and diagonals of $\ast$ and $\dmd$ coincide, hence $\dmd$ is an idempotent in the $\L$--class of $\ast$, thus $\ast$ is regular.
\end{proof}

\begin{example}
    Theorem~2.13 from \cite{Lopez2022} constructs an operation $\ast \in \M$ on $X = \{0, 1\}$ with $\mathrm{cim}(T_\ast)\neq \mathrm{diag}(T_\ast)$ given by the pairmorph graph depicted on Figure~\ref{fig:nonreg}. This serves as a proof of non-regularity of $\mathcal{M}$. Here $(1, 1)$ belongs to $cim(T_\ast)$ because it has an incoming arrow from $(0, 1)$. However, it has no incoming arrows neither from $(0, 0)$ nor from $(1, 1)$, thus $cim(T_\ast) \neq diag(T_\ast)$.
\end{example}
\begin{figure}[H]
    \centering
    \begin{tikzpicture}[node distance=16mm, scale=0.5,
  main/.style = {draw, circle, minimum size=0.6cm}]
    \node[main] (1) at (0,0) {$(0,0)$};
    \node[main] (2) at (0,4) {$(1,1)$};
    \node[main] (3) at (5,1.5) {$(0,1)$};

    \draw[-{Latex[length=1.2mm,width=2.4mm]}]
      (1) edge[in=-110, out=-70, looseness=5] (1)
      (2) edge (1)
      (3) edge[blue, dotted] (2);

    \end{tikzpicture}
    \caption{A non-regular element of $\M$ for $X = \{0, 1\}$.}
    \label{fig:nonreg}
\end{figure}
\begin{example}
    The multiplication operation $\cdot$ on $\mathbb{R}$ is not regular in $\M(\mathbb{R})$, because $(-1, -1) \in  im(T_\cdot)$ and is not in $diag(T_\cdot)$. On the other hand, the operation $+$ on $\mathbb{R}$ is regular in $\M(\mathbb{R})$.
\end{example}

\begin{theorem}
	The number of regular elements in $\M(X)$ is exactly
	\[
		\sum_{d=1}^n\binom{n}{d}\left(n^2 - n +d\right)^{N}\Surj(n, d).
	\]
\end{theorem}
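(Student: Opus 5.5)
By Proposition~\ref{cim=diag}, counting regular elements means counting operations $\ast$ with $cim(T_\ast) = diag(T_\ast)$. We use the kernel--cokernel bijection of Proposition~\ref{phi_psi_decomp}: $\ast$ corresponds to a pair $(ker_\ast, cok_\ast)$, where $ker_\ast\colon \Delta(X)\to\Delta(X)$ and $cok_\ast\colon \omega \to X\times X$. The plan is to stratify by the diagonal image $D = diag(T_\ast) = im(ker_\ast)$, with $|D| = d$ and $1 \le d \le n$, and count each stratum independently.

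Fix a nonempty $D\subset\Delta(X)$ with $|D|=d$. There are $\binom{n}{d}$ choices of $D$. The kernel is then exactly a surjection of $\Delta(X)$ onto $D$, which gives $\Surj(n,d)$ options.

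Given the kernel, $cim(T_\ast)$ is $D$ together with those diagonal pairs hit by the cokernel. This is because values on $\omega^\leftarrow$ are mirrors of values on $\omega$, and mirrors of diagonal pairs are themselves. So the condition $cim(T_\ast)=D$ is equivalent to requiring every value of $cok_\ast$ to lie in $(X\times X)\setminus(\Delta(X)\setminus D)$. Equivalently, each $cok_\ast(a,b)$ must be either an off-diagonal pair or a pair in $D$.

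That target set has $(n^2-n)+d$ elements. There is no further constraint: the values on $\omega$ are free by Proposition~\ref{phi_psi_decomp}, and the mirror values are determined and automatically satisfy the condition, since $(x,y)$ is off-diagonal exactly when $(y,x)$ is. Hence there are $(n^2-n+d)^{N}$ cokernels, with $N=|\omega|$.

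Multiplying the independent choices gives $\binom{n}{d}\Surj(n,d)(n^2-n+d)^N$ for each $d$, and summing over $d$ yields the formula. The one step needing care is the equivalence in the third paragraph: that the condition $cim=diag$ depends only on the cokernel avoiding $\Delta(X)\setminus D$, and that the choices on $\omega$ and $\Delta(X)$ are genuinely independent. Both follow from the description of $T_\ast$ by its restrictions to $\Delta(X)$ and $\omega$ (Lemma~\ref{T_ast_criterion}), so no inclusion--exclusion is needed, unlike in Theorem~\ref{L_class_cardinality}.
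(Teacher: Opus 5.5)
Your proof is correct and matches the paper's argument step for step. You stratify by the diagonal image $D$, count $\binom{n}{d}$ choices of $D$ and the surjective kernels onto it, and use the criterion $cim(T_\ast)=diag(T_\ast)$ to see that the cokernel on $\omega$ must avoid $\Delta(X)\setminus D$, which gives $(n^2-n+d)^N$ cokernels. You also spell out the mirror-symmetry reason why values on $\omega^\leftarrow$ add no new diagonal pairs, which the paper leaves implicit.
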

\begin{proof}
    Let $D \subset \Delta(X)$ be a nonempty set. Denote $|D| = d$. We count all regular elements $\ast$ that have $D$ as its diagonal image. All possible kernels correspond to all possible surjections of $\Delta(X)$ onto $D$, hence there are $\Surj(n, d)$ of them. For $\ast$ to be regular, elements of $\omega$ can not be mapped to $\Delta(X) \setminus D$. Therefore, there are $(n^2 - n +d)^N$ possible cokernels. The claim follows by applying the product rule and repeating this argument for every choice of nonempty $D \subset \Delta(X)$. 
\end{proof}

As a closing result for this section, we are going to prove a conjecture from~\cite{Rafieipour2023}.
\begin{definition}[\protect{\cite[Definition~3.2.1]{Rafieipour2023}}]
    A binary operation $\ast \in \M(X)$ is called a \textit{two-value magma} if for any $x, y \in X$, we have $x \ast y \in \{x, y\}$.
\end{definition}
\begin{proposition}[\protect{\cite[Conjecture~6.2.7]{Rafieipour2023}}]
    The number of two-valued magmas that are idempotent elements of $\M$ is exactly
    \[
        \sum_{m = 0}^N\binom{N}{m}2^{N - m}=3^N.
    \]
\end{proposition}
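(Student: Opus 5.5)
The plan is to apply Proposition~\ref{idempotent-crit} directly, using the decomposition into kernel and cokernel over a fixed tournament $\omega$ (Proposition~\ref{phi_psi_decomp}). I will classify, pair by pair, which local values of a two-valued magma are compatible with idempotency.

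First, the kernel. For a two-valued magma, $x \ast x \in \{x\}$, so $T_\ast(x,x) = (x,x)$ for every $x$. Hence $ker_\ast$ is the identity, $diag(T_\ast) = \Delta(X)$, and the criterion of Proposition~\ref{idempotent-crit} holds automatically on $\Delta(X)$.

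Next, the cokernel. Fix $(a,b) \in \omega$ with $a \neq b$. Since $a \ast b, b \ast a \in \{a,b\}$, there are exactly four possibilities for $T_\ast(a,b)$: $(a,a)$, $(b,b)$, $(a,b)$ or $(b,a)$. I will check each against Proposition~\ref{idempotent-crit}.
\begin{enumerate}
    \item If $T_\ast(a,b) \in \{(a,a),(b,b)\}$, the image point lies in $\Delta(X)$, where $T_\ast$ is the identity. So no condition is violated.
    \item If $T_\ast(a,b) = (a,b)$, then $T_\ast(b,a) = (b,a)$ by Lemma~\ref{T_ast_criterion}. So $T_\ast$ fixes both new image points, and this choice is also allowed.
    \item If $T_\ast(a,b) = (b,a)$, then $(b,a) \in im(T_\ast)$, while $T_\ast(b,a) = (a,b) \neq (b,a)$. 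This violates the criterion.
\end{enumerate}

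The key observation is that the image points produced at $(a,b)$ lie in $\{(a,a),(b,b),(a,b),(b,a)\}$, and whether $T_\ast$ fixes them depends only on the value chosen at the pair $\{a,b\}$ itself. So the conditions for distinct pairs of $\omega$ are independent. I expect this independence to be the only point requiring care: a point such as $(a,b)$ can only arise as an image of $(a,b)$ or $(b,a)$, and diagonal points are always fixed. Once it is checked, idempotent two-valued magmas correspond bijectively to choices, for each of the $N$ arcs of $\omega$, among three options: $(a,a)$, $(b,b)$ or $(a,b)$.

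To match the stated form, let $m$ be the number of arcs on which $\ast$ does not commute, that is, the size of $acdom(T_\ast) \cap \omega$. These $m$ arcs can be chosen in $\binom{N}{m}$ ways, and each is forced to be the identity. Each of the remaining $N-m$ commuting arcs has $2$ diagonal options. This gives $\sum_{m=0}^N \binom{N}{m} 2^{N-m}$, which equals $(1+2)^N = 3^N$ by the binomial theorem.
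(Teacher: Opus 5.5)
Your proof is correct and takes essentially the same route as the paper: the kernel of a two-valued magma is forced to be the identity, and on each arc of $\omega$ only the swapped value $(b,a)$ violates Proposition~\ref{idempotent-crit}, which leaves three options per arc. Beyond the paper's argument, you spell out why the conditions on different arcs are independent, and you read $\sum_{m}\binom{N}{m}2^{N-m}$ as a count by the number $m$ of non-commuting arcs; the paper leaves both points implicit.
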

\begin{proof}
    Let $\ast \in \M$ be a two-valued magma. By definition, for any two elements $x, y \in X$, the only options for the value of $T_\ast(x, y)$ are $\{(x, x), (x, y), (y, x), (y, y)\}$. Directly from this, $T_\ast|_{\Delta(X)}$ acts as a left zeros operation. Hence, the kernel of $\ast$ is already determined. \\

    Note that if for any two distinct $x, y \in X$ we have $T_\ast(x, y) = (y, x)$, then $\ast$ is not going to be an idempotent element of $\M$, since in that case $(y, x) \in im(T_\ast)$ and $T_\ast(y, x) = (x, y)$. On the other hand, any of the three other options for $T_\ast(x, y)$ keep $\ast$ acting as $\circ_{lz}$ on its pairmorph image. Therefore, for every element of $\omega$, there are three ways to define the cokernel. Hence, there are exatcly $3^N$ such cokernels, and, as a result, $3^N$ idempotent two-value magmas.
\end{proof}

\section{A note on the error concerning the center of $\M$}

This section is dedicated to addressing a problem in the 2011 paper of H.~F.~Fayoumi~\cite{Fayoumi2011Center}, regarding the center of the magma monoid. Recall that the \textit{center} of a semigroup $S$ is a subset $Z(S) \subset S$ of all elements that commute with every other element of $S$. In our case, the center of $\M$ is a subset $Z(\M) \subset \M$ consisting of all elements $\ast \in \M$ such that for any $\circ \in \M$, we have $\ast \trg \circ = \circ \trg \ast$. \\

The result in \cite[Theorem~3.1]{Fayoumi2011Center} states that the center of $\M$ corresponds to the set of all \textit{locally-zero} operations, i.e. the set of all operations $\ast \in \M$ such that for any $a, b \in X$, we have $T_\ast(a, b) \in \{(a, b), (b, a)\}$. However, this conclusion is incorrect.

As a counterexample, consider the set $X = \{0, 1, 2\}$, and let $\ast \in \M$ be given by the following pairmorph graph:
\begin{figure}[H]
    \centering
    \begin{tikzpicture}[node distance=16mm, scale=0.5,
  main/.style = {draw, circle, minimum size=0.6cm}]
    \node[main] (1) at (0,0) {$(1,1)$};
    \node[main] (2) at (0,4) {$(2,2)$};
    \node[main] (3) at (0,8) {$(0,0)$};
    \node[main,fill=gray] (4) at (5,4) {$(0,1)$};
    \node[main] (5) at (5,8) {$(0,2)$};
    \node[main] (6) at (5,0) {$(1,2)$};

    \draw[-{Latex[length=1.2mm,width=2.4mm]}]
      (1) edge[in=-110, out=-70, looseness=5] (1)
      (2) edge[in=-110, out=-70, looseness=5] (2)
      (3) edge[in=-110, out=-70, looseness=5] (3)
      (4) edge[red, in=-110, out=-70, looseness=5, dashed] (4)
      (5) edge[red, in=-110, out=-70, looseness=5, dashed] (5)
      (6) edge[red, in=-110, out=-70, looseness=5, dashed] (6);

    \end{tikzpicture}
    \caption{A locally-zero operation not in the center of $\M$.}
    \label{fig:bad}
\end{figure}
Consider any operation $\circ \in \M$ having $T_\circ(0, 1) = (1, 2)$. Then
\[
T_{\circ \trg \ast}(0, 1) = T_\ast(T_\circ(0, 1)) = T_\ast(1, 2) = (1, 2).
\]
At the same time, we know that
\[
    T_{\ast \trg \circ}(0, 1) = T_\circ(T_\ast(0, 1)) = T_\circ(1, 0) = (T_\circ(0, 1))^\leftarrow = (2, 1).
\]
Henceforth, $\circ \trg \ast$ is not equal to $\ast \trg \circ$, despite $\ast$ being a locally-zero operation. This counterexample arises from the fact that the proof of~\cite[Proposition 2.5]{Fayoumi2011Center}, which~\cite[Theorem 3.1]{Fayoumi2011Center} relies on, is incorrect. We close this gap by providing a correct characterization, demonstrating that the center of $\M$ is trivial.

\begin{proposition}\label{center_is_locally_zero}
    For any set $X$, the center of $\M(X)$ consists only of the left zeros and the right zeros operations. In other words,
    \[
        Z(\M) = \{\circ_{lz}, \circ_{rz}\}.
    \]
\end{proposition}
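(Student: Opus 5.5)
Via Theorem~\ref{M_iso_to_T_XxX}, I will replace the center of $\M$ by the centralizer of the full submonoid $\T^\leftrightarrow$ of transformations of $X\times X$ that commute with the swap map $\sigma:(u,v)\mapsto(v,u)$. Under this identification $\circ_{lz}\mapsto \mathrm{id}$ and $\circ_{rz}\mapsto\sigma$. Both commute with every $T\in\T^\leftrightarrow$: the identity trivially, and $\sigma$ by the very definition of $\T^\leftrightarrow$ (Lemma~\ref{T_ast_criterion}). This gives the inclusion $\{\circ_{lz},\circ_{rz}\}\subset Z(\M)$. For the converse I fix $T_\ast$ central and show that $T_\ast\in\{\mathrm{id},\sigma\}$.

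\textbf{Step 1: constant test elements.} For each $d\in X$ the constant operation $c_d$ has $T_{c_d}\equiv(d,d)$. Centrality gives $T_\ast(T_{c_d}(p))=T_{c_d}(T_\ast(p))$, so $T_\ast(d,d)=(d,d)$. Hence $T_\ast$ fixes $\Delta(X)$ pointwise.

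\textbf{Step 2: $T_\ast$ preserves each unordered pair.} For an off-diagonal pair $p=(a,b)$ I use an idempotent test element $E$ with $\operatorname{im}(E)=\{p,p^\leftarrow\}$. Concretely, $E$ is the identity on $\{p,p^\leftarrow\}$ and sends every other pair of $\omega$ to $p$ (and its mirror to $p^\leftarrow$). The diagonal poses a problem, since $E$ must send $\Delta(X)$ into $\Delta(X)$. So I instead take $E$ sending all diagonal elements to $(a,a)$, which gives $\operatorname{im}(E)=\{p,p^\leftarrow,(a,a)\}$. Then $T_\ast(p)=T_\ast(E(p))=E(T_\ast(p))\in \operatorname{im}(E)$. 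Now $T_\ast(p)\neq(a,a)$: otherwise, repeating the argument with the diagonal collapsed to $(b,b)$ instead forces $T_\ast(p)\in\{p,p^\leftarrow,(b,b)\}$, a contradiction for $n\ge2$. So $T_\ast(p)\in\{p,p^\leftarrow\}$, i.e.\ $\ast$ is locally-zero. This recovers the correct half of Fayoumi's claim.

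\textbf{Step 3: the orientation is globally constant.} This is the main obstacle, and exactly where the counterexample above lives. Set $\varepsilon(p)\in\{\mathrm{id},\sigma\}$ so that $T_\ast(p)=\varepsilon(p)(p)$; note that $\varepsilon(p)=\varepsilon(p^\leftarrow)$. Given two off-diagonal pairs $p,q$, I pick $T\in\T^\leftrightarrow$ with $T(p)=q$; this is allowed because $p$ and $p^\leftarrow$ lie in distinct $\omega$-orbits, so $T(p^\leftarrow)=q^\leftarrow$ can be set consistently, and $T$ is arbitrary elsewhere. Then
\[
\varepsilon(q)(q)=T_\ast(T(p))=T(T_\ast(p))=T(\varepsilon(p)(p))=\varepsilon(p)(q),
\]
using that $T$ commutes with $\sigma$. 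Since $q\neq q^\leftarrow$, this gives $\varepsilon(p)=\varepsilon(q)$. So $T_\ast$ is either the identity or $\sigma$ on all off-diagonal pairs, and on $\Delta(X)$ both agree with Step 1. Hence $T_\ast\in\{\mathrm{id},\sigma\}$, i.e.\ $\ast\in\{\circ_{lz},\circ_{rz}\}$.

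When writing this out I will also treat $n=1$ separately: there $\circ_{lz}=\circ_{rz}$ and $\M$ is trivial. For $n\ge 2$ the two operations are distinct, so the center has exactly two elements.
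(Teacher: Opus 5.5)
Your proof is correct and follows essentially the same route as the paper. First you show that $\ast$ is locally-zero by testing against elements that fix $p$ but not $T_\ast(p)$; your idempotents $E$, $E'$ play the role of the paper's $\circ$ with $T_\circ(a,b)=(a,b)$ and $T_\circ(x,y)=(u,v)$. Then you show the orientation is uniform by testing against an element that sends one off-diagonal pair to another, exactly as in the paper's Claim~2.

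Handling the diagonal separately with the constants $c_d$ is a small improvement. It avoids the paper's requirement $(u,v)\notin\{(a,b),(x,y)\}$, which cannot be met when $a=b$, $x=y$ and $n=2$. That requirement is in fact stronger than needed there, since $(u,v)\neq(a,b)$ is never used.
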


\begin{proof}
    These two operations clearly commute with every element of $\M$. We are interested in demonstrating that $\circ_{lz}$ and $\circ_{rz}$ are the only two such operations. Hence, let $\ast \in Z(\M)$ be an operation from the center of $\M$. 

    \textbf{Claim 1:} The operation $\ast$ is locally-zero.

    To the contrary, suppose that $\ast$ is not a locally-zero operation. Then there exists a pair $(a, b) \in X \times X$, such that $T_\ast(a, b) \notin \{(a, b), (b, a)\}$. Denote $T_\ast(a, b) = (x, y)$. Choose some $(u, v) \in X \times X$ such that $(u, v) \notin \{(a, b), (x, y)\}$, and consider an operation $\circ \in \M$ with $T_\circ(a, b) = (a, b)$ and $T_\circ(x, y) = (u, v)$. Note that if $x = y$, then $u$ must be equal to $v$. Observe that
    \[
        T_{\circ \trg \ast}(a, b) = T_\ast(T_\circ(a, b)) = T_\ast(a, b) = (x, y).
    \]
    On the other hand,
    \[
        T_{\ast \trg \circ}(a, b) = T_\circ(T_\ast(a, b)) = T_\circ(x, y) = (u, v).
    \]

    This means that $\circ \trg \ast \neq \ast \trg \circ$, which is a contradiction.

    \textbf{Claim 2:} $\ast \in \{\circ_{lz}, \circ_{rz}\}$.
    
    Suppose that $\ast \notin \{\circ_{lz}, \circ_{rz}\}$. Then there are distinct pairs of distinct elements $(a, b)$ and $(c, d)$, such that $\ast$ acts as the left-zeros operation on $(a, b)$ and as the right-zeros operation on $(c, d)$.  And as was shown above, any operation $\circ \in \M$ with $T_\circ(a, b) = (c, d)$ necessarily does not commute with $\ast$ under $\trg$. The obtained cotradiction finishes the proof.
\end{proof}

\section*{Acknowledgments} 
The authors are deeply grateful to the Armed Forces of Ukraine for keeping Kyiv safe, which made it possible for us to work on this paper. We also thank Serhii Slobodianiuk, Maryna Nesterenko, and Eugenia Kochubinska for a careful reading and insightful remarks that improved the presentation of the paper.

\bibliographystyle{plain}
\bibliography{references}% common bib file
%% if required, the content of .bbl file can be included here once bbl is generated
%%\input sn-article.bbl

\end{document}